\documentclass[12pt]{article}

\usepackage{titling}
\usepackage[numbers,square,comma]{natbib}
\usepackage{float}
\usepackage[pdfpagelabels]{hyperref}
\usepackage{amsmath}
\usepackage{amssymb}
\usepackage{amsthm}
\usepackage{graphicx}
\usepackage{geometry}
\usepackage{tikz}
\usepackage{tkz-graph}
\usepackage{chngcntr}
\usepackage{thmtools}
\usepackage{thm-restate}
\usepackage{authblk}
\usepackage{tabularx}
\usepackage{environ}
\usepackage{bbold}
\usetikzlibrary{shapes}

\hypersetup{%
    pdftitle={Distance-Restricted Firefighting on Finite Graphs},
    pdfauthor={John Marcoux, Andrea C. Burgess, David A. Pike, John Hawkin, Alexander Howse},                                                   
    pdfkeywords={graph theory, combinatorics},             
    colorlinks = true,                                                          
    linkcolor = black,                                                          
    anchorcolor = black,                                                        
    citecolor = black,                                                          
    filecolor = black,                                                          
    urlcolor = black,                                                           
    pdfprintscaling=None                                                        
 }

\counterwithin{figure}{section}

\geometry{
letterpaper,
left=1in,
right=1in
}

\makeatletter
\newcommand{\problemtitle}[1]{\gdef\@problemtitle{#1}}
\newcommand{\probleminput}[1]{\gdef\@probleminput{#1}}
\newcommand{\problemquestion}[1]{\gdef\@problemquestion{#1}}
\NewEnviron{problem}{
  \problemtitle{}\probleminput{}\problemquestion{}
  \BODY
  \par\addvspace{.5\baselineskip}
  \noindent
  \begin{tabularx}{\textwidth}{@{\hspace{\parindent}} l X c}
    \multicolumn{2}{@{\hspace{\parindent}}l}{\@problemtitle} \\
    \textbf{Input:} & \@probleminput \\
    \textbf{Question:} & \@problemquestion
  \end{tabularx}
  \par\addvspace{.5\baselineskip}
}
\makeatother

\title{Distance-Restricted Firefighting on Finite Graphs}

\author[1]{Andrea C. Burgess}
\author[2]{John Hawkin}
\author[3]{Alexander Howse}
\author[4]{\\John Marcoux}
\author[5]{David A. Pike}
\affil[1]{Department of Mathematics and Statistics, University of New Brunswick, Saint John, NB, E2L 4L5, Canada. \texttt{andrea.burgess@unb.ca}}
\affil[2]{Nasdaq Verafin, 18 Hebron Way, St. John's, NL, A1A 0L9. \texttt{john.hawkin@nasdaq.com}}
\affil[3]{Nasdaq Verafin, 18 Hebron Way, St. John's, NL, A1A 0L9. \texttt{alexander.howse@nasdaq.com}}
\affil[4]{Department of Mathematics, Toronto Metropolitan University, Toronto, ON, M5B 2K3, Canada. \texttt{jmarcoux@torontomu.ca}}
\affil[5]{Department of Mathematics and Statistics, Memorial University of Newfoundland, St. John's, NL, A1C 5S7, Canada. \texttt{dapike@mun.ca}}
\date{\today}

\newtheorem{theorem}{Theorem}[section]

\newtheorem{lemma}[theorem]{Lemma}
\newtheorem{conjecture}[theorem]{Conjecture}

\theoremstyle{definition}

\begin{document}

\maketitle
\medskip 

\begin{abstract}
    In the classic version of the game of firefighter, on the first turn a fire breaks out on a vertex in a graph $G$ and then $b$ firefighters protect $b$ vertices. On each subsequent turn, the fire spreads to the collective unburned neighbourhood of all the burning vertices and the firefighters again protect $b$ vertices. Once a vertex has been burned or protected it remains that way for the rest of the game. In \textit{distance-restricted firefighting} the firefighters' movement is restricted so they can only move up to some fixed distance $d$ and they may or may not be permitted to move through burning vertices. In this paper we establish the NP-completeness of the distance-restricted versions of {\sc $b$-Firefighter} and present an integer program for computing the exact value. We also discuss some interesting properties of the \textit{Expected Damage} function.
\end{abstract}

\section{Introduction}\label{sec:intro}

The firefighter problem, introduced by Hartnell in 1995~\cite{hartnell1995firefighter}, is a turn-based game played on a graph where fire is a model for some contagion that spreads from a set of `burning' vertices to their collective neighbourhoods on each turn. For our purposes we consider the game to be a process which takes place over a series of discrete rounds. Initially there is some set of burning vertices. In each round each firefighter protects a single vertex from the fire, the consequence being that the fire does not spread to these vertices for the remainder of the game. On each subsequent turn, the fire spreads to the unprotected neighbourhoods of the burning vertices and the firefighters protect a new set of vertices. The notion of \textit{distance-restricted} firefighting was introduced in~\cite{burgess2022distance} as a new variant of the firefighter problem. In this version the firefighters were restricted to moving up to some distance $d$ each turn and could not move through any burning vertices. We make note of two things now. First, even though the firefighters may `leave' a vertex after defending it, it still remains defended for the remainder of the game. Second, in the first round the firefighters do not have a previous position to move from so we allow them to begin at any unburned vertex. Up to this point, distance-restricted firefighting has only been studied in the context of the containment problem on infinite graphs~\cite{burgess2022distance, chen2017continuous, days2019firefighter} whereas the present paper focuses on problems on finite graphs.

One of the key items of interest in firefighting is the \textit{Maximum Vertices Saved} ({\sc MVS}($G$,$F$;$b$)) function. Given a graph $G$, a set $F$ of initially burning vertices, and a number $b$ of firefighters, {\sc MVS}($G$,$F$;$b$) is equal to the maximum number of vertices the firefighters can save. The related decision problem asking if {\sc MVS}($G$,$F$;$b$) is at least some constant $k$ is referred to as {\sc $b$-Firefighter}. Another decision problem is {\sc ($S$,$b$)-Fire} where given a graph, a set of initially burning vertices and a set of vertices $S$ we ask if all the vertices in $S$ can be saved using only $b$ firefighters. Both {\sc $b$-Firefighter} and {\sc ($S$,$b$)-Fire} are known to be NP-complete even when restricted to certain classes of trees. These are both consequences of the main result from~\cite{bazgan2013firefighter} (which builds on a result from~\cite{finbow2007npcomplete}) where it is shown that the {\sc Max ($S$,$b$)-Fire} problem is NP-hard. Note that the value of $b$ is fixed as part of the problem specification, and is not itself an input parameter.

    \begin{problem}
        \problemtitle{{\sc Max ($S$,$b$)-Fire}}
        \probleminput{A graph $G$, a set of vertices $S \subseteq V(G)$, and a vertex $u \in V(G)$}
        \problemquestion{What is the maximum number of vertices in $S$ which can be saved using $b$ firefighters if a fire breaks out at $u$?}
    \end{problem}

    We recommend the survey by Finbow and MacGillivray~\cite{finbow2009firefighter} and the recent thesis by Wagner~\cite{wagner2021new} for further reading on these problems as well as other variants of the firefighter problem.

    For our purposes we define two variants of the {\sc MVS} function, \textit{Distance-Restricted Maximum Vertices Saved} ({\sc DRMVS}($G$,$F$;$b$,$d$)) and \textit{Distance-Path-Restricted Maximum Vertices Saved} ({\sc DPRMVS}($G$,$F$;$b$,$d$)). With {\sc DRMVS} each firefighter can move up to some fixed distance $d$ (which is now included in the problem parameters). For {\sc DPRMVS} the firefighters can move up to some fixed distance $d$ and in addition the firefighters cannot move through the fire. We will see in Section~\ref{sec:complexity} that although these games may seem similar they can behave in very different ways in certain contexts.

    A common approach for analyzing optimization problems is to restate them as decision problems and analyze their complexity. In order to do this we first consider the {\sc $b$-Firefighter} decision problem, studied in~\cite{bazgan2013firefighter}.

    \begin{problem}
        \problemtitle{{\sc $b$-Firefighter}}
        \probleminput{A graph $G$, a set of vertices $F \subseteq V(G)$, and a positive integer $k$}
        \problemquestion{Is {\sc MVS}($G$,$F$;$b$) $ \geq k$?}
    \end{problem}

    \noindent This is easily extended to the two decision problems which will be the focus of this paper. Let $d$ be a positive integer and define the following decision problems, for which $b$ and $d$ are part of the problem specification.

    \begin{problem}
        \problemtitle{{\sc $d$-DR-$b$-FF}}
        \probleminput{A graph $G$, a set of vertices $F \subseteq V(G)$, and a positive integer $k$}
        \problemquestion{Is {\sc DRMVS}($G$,$F$;$b$,$d$) $\geq k$?}
    \end{problem}
    
    \begin{problem}
        \problemtitle{{\sc $d$-DPR-$b$-FF}}
        \probleminput{A graph $G$, a set of vertices $F \subseteq V(G)$, and a positive integer $k$}
        \problemquestion{Is {\sc DPRMVS}($G$,$F$;$b$,$d$) $\geq k$?}
    \end{problem}

    It is also worth noting that if we allow the case where $d=0$, {\sc $d$-DPR-$b$-FF} and {\sc $d$-DR-$b$-FF} are both similar to the {\sc Firebreak} problem~\cite{barnetson2021firebreak} where the firefighters defend for only the first turn and then allow the fire to spread as much as possible. In this case the only difference is that for our problems the number of firefighters is part of the problem statement but for {\sc Firebreak} it is part of the problem input. Note as well that if $d$ is at least the diameter of the graph then {\sc $d$-DR-$b$-FF} is equivalent to {\sc $b$-Firefighter} but under the same conditions {\sc $d$-DPR-$b$-FF} is not equivalent to {\sc $b$-Firefighter} in general.

    In the remaining sections of the paper, we first discuss the complexity of {\sc $d$-DR-$b$-FF} and {\sc $d$-DPR-$b$-FF} by showing their NP-completeness on graphs in general. We also show that the complexity of {\sc $d$-DPR-$b$-FF} is greatly reduced on trees and pose questions about the complexity of solving {\sc $d$-DPR-$b$-FF} on different graph classes. Following this, an extension of a $0,1$-integer program for solving the original {\sc MVS} function is explored which is capable of computing {\sc DRMVS} and a second extension which computes {\sc DPRMVS} is considered as well. In the penultimate section of the paper, the concept of expected damage is explored for distance-restricted and distance-path-restricted firefighting and we show that the monotonicity of the expected damage function for the original game does not hold for the distance-restricted game. The final section poses some problems which we consider natural next steps for this variant of firefighting.

    For a reference on graph theory we recommend~\cite{west2000introduction} and for complexity theory we recommend~\cite{arora2009computational}.

\section{Complexity of {\sc $d$-DR-$b$-FF} and {\sc $d$-DPR-$b$-FF}}\label{sec:complexity}
   
One of the ever present questions for problems like {\sc $d$-DR-$b$-FF} and {\sc $d$-DPR-$b$-FF} is whether or not they can be solved efficiently. It is already known that {\sc $b$-Firefighter} is NP-complete even when restricted to trees of maximum degree $b+2$~\cite{bazgan2013firefighter}. For {\sc $d$-DR-$b$-FF} and {\sc $d$-DPR-$b$-FF} we must first examine a fundamental building block of the game, which is determining whether or not a move is valid. If we cannot efficiently decide whether or not a move is valid then it is unlikely that we can solve either of these decision problems efficiently. To deal with this problem of checking a move's validity, we first build an auxiliary bipartite graph where one partite set $A$ represents the vertices defended at time $t$ and the other partite set $B$ represents the vertices defended at time $t+1$. Note that if multiple firefighters are on the same vertex at either of these time steps then we include one copy of the vertex for each firefighter. We add the edge $uv$ if the vertex from the original graph represented by $u \in A$ is distance at most $d$ from the vertex represented by $v \in B$ in the original graph. Of course in the path restricted case we modify this edge condition so the distance is determined in the subgraph induced by the unburned vertices. It is clear that a valid move in the game exists if and only if this auxiliary graph has a perfect matching, which can be determined in polynomial time~\cite{edmonds1965paths}.

    Now we can move on to the main theorem of this section, Theorem~\ref{thm:npcomplete}

\begin{theorem}\label{thm:npcomplete}
    {\sc $d$-DR-$b$-FF} and {\sc $d$-DPR-$b$-FF} are NP-complete when $d \geq 2$.
\end{theorem}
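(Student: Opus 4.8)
The plan is to establish, for each problem, both membership in NP and NP-hardness. For membership, I would take as a certificate the sequence $(D_0, D_1, \dots)$ of vertex sets protected by the firefighters on successive turns. Since the fire must engulf a new vertex on every turn on which it spreads at all, the game lasts at most $|V(G)|$ turns, so the certificate has size polynomial in the input. A verifier then checks in polynomial time that each $D_t$ has at most $b$ vertices, that each consecutive pair $(D_t, D_{t+1})$ is a legal move --- exactly the perfect-matching test on the auxiliary bipartite graph described above, with distances measured in $G$ for {\sc DR-$b$-FF} and in the subgraph induced by the unburnt vertices for {\sc DPR-$b$-FF} --- and finally simulates the deterministic fire against these protections and confirms that at least $k$ vertices are saved. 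This places both problems in NP.

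For hardness I would reduce from {\sc $b$-Firefighter}, which is NP-complete even on trees of maximum degree $b+2$ \cite{bazgan2013firefighter}. The case of {\sc DR-$b$-FF} is immediate: given an instance $(G, F, k)$, output $(G, F, d, k)$ with $d = \mathrm{diam}(G)$. Every vertex is then within distance $d$ of every other, the movement restriction becomes vacuous, and a firefighter may protect any unburnt vertex on any turn; hence {\sc DRMVS}$(G,F;b,d)$ coincides with {\sc MVS}$(G,F;b)$ and the two instances are equivalent. As the source graphs are trees, this already shows {\sc DR-$b$-FF} is NP-complete even on bipartite graphs.

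The case of {\sc DPR-$b$-FF} is genuinely harder, since setting $d = \mathrm{diam}(G)$ does not remove the restriction: a firefighter still cannot cross burning vertices to reach a target. Moreover the reduction cannot output a tree, because (as discussed later) {\sc DPR-$b$-FF} is tractable on trees. My plan is therefore to augment the source tree $T$, rooted at the fire source, into a bipartite graph $G'$ containing cycles by attaching a common region $H$ of permanently-safe vertices together with short ``corridors'' joining $H$ to each vertex of $T$ that an optimal player might defend. The corridors are laid out (and $d$ chosen) so that a firefighter waiting in $H$ can reach its intended target in one move along a fire-free path, which lets $G'$ faithfully simulate the unrestricted game on $T$; the shared hub $H$ is what introduces the cycles. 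Parities can be fixed by subdividing so that $G'$ is bipartite, and the threshold $k$ is adjusted to account for the auxiliary vertices.

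The main obstacle is the correctness of this DPR simulation in both directions. I must show that the corridors genuinely neutralize the path restriction --- that the firefighters can always realize the protections an optimal strategy on $T$ would select, subject to the distance and non-crossing constraints --- while simultaneously controlling the fire that spreads from $T$ into the corridors so that it neither reaches $H$ nor blocks other firefighters, and ensuring that the added structure confers no extra saving power that would let $G'$ beat the target when $T$ cannot. Pinning down the corridor lengths and the value of $d$ so that all of these conditions hold at once, and confirming that the construction remains bipartite and of polynomial size, is where the real work lies.
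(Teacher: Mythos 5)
Your NP-membership argument and your reduction for {\sc DR-$b$-FF} (set $d = \mathrm{diam}(G)$, making the movement constraint vacuous) are exactly the paper's argument, and they are correct. The problem is the {\sc DPR-$b$-FF} half: what you have written there is a plan, not a proof, and the points you defer (``where the real work lies'') are precisely the points where the argument can fail. Concretely: (a) there is no such thing as a ``permanently-safe'' hub $H$ in this game --- if $H$ is joined by corridors to every vertex of $T$, the fire enters every corridor incident to a burnt vertex and will eventually reach $H$ unless the corridors are made long relative to the length of the game, and you never fix their length or the value of $d$; (b) a firefighter cannot ``wait'' in $H$ --- it must protect some vertex every turn and then move at most $d$ from that vertex --- so you need an explicit timing argument that it can still realize an arbitrary protection sequence of the unrestricted game on $T$; (c) your plan to ``adjust the threshold $k$ to account for the auxiliary vertices'' does not work as stated, because the number of corridor/hub vertices that burn is not a fixed constant: it depends on where the fire starts and on the strategy played, so a simple additive correction to $k$ cannot make the instances equivalent.

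The paper resolves all three issues with one construction you would do well to compare against. It joins every pair of non-adjacent vertices of $G$ by a path subdivided $2r$ times (where $r = |V(G)|$), takes $d = 2r+1$ so that any two unburnt original vertices remain mutually reachable by a fire-free path of length at most $d$, and --- crucially --- first passes through a \emph{weighted} intermediate problem, giving the original vertices weight $2rt+1$ and the subdivision vertices weight $1$, so that the total weight of all auxiliary vertices is less than that of a single original vertex and issue (c) disappears; the weights are then removed by attaching $w_v - 1$ leaves to each vertex $v$. The long subdivisions also settle (a) and (b): the fire needs more than $r$ turns to cross any added path, which leaves the firefighters time to act as in the unrestricted game. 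Your hub-and-corridor idea is in the same spirit (auxiliary fire-free routes between defence targets), but without the length, distance, and weight parameters pinned down, and without a mechanism replacing the weighting step, the equivalence of the two instances is not established, so the NP-hardness of {\sc DPR-$b$-FF} remains unproved in your write-up.
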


    In order to show this we begin by noting that the problems are both in NP since we can use the sequence of defended vertices as an easily verifiable certificate.

\begin{lemma}
    {\sc $d$-DR-$b$-FF} and {\sc $d$-DPR-$b$-FF} are in NP.
\end{lemma}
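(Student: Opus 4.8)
The plan is to exhibit, for each problem, a polynomial-size certificate that is verifiable in polynomial time; membership in NP then follows from the definition. The natural certificate is a complete record of the firefighters' play, namely a sequence $D_0, D_1, \ldots, D_T$ of vertex sets in which $D_t$ lists the (at most $b$) vertices defended on turn $t$. The first step is to bound the number of turns that must be recorded. Since the fire burns at least one new vertex on every turn in which it is still able to spread, and there are only $|V(G)|$ vertices in total, the game stabilizes after at most $|V(G)|$ turns; beyond that point no further vertices can burn and no additional defending can change the outcome, so there is no loss in taking $T \le |V(G)|$. Consequently the certificate has size $O\bigl(b\,|V(G)|\bigr)$, which is polynomial in the size of the input.

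Next I would describe the verification procedure, which simulates the game against the certificate turn by turn while maintaining the current set of burnt vertices and the set of defended vertices. At each turn $t$ the verifier confirms two things: that the transition from $D_t$ to $D_{t+1}$ is a legal move, and that the fire subsequently spreads correctly. Legality of the move is exactly the condition discussed immediately before the theorem: one forms the auxiliary bipartite graph whose parts are $D_t$ and $D_{t+1}$, joins two vertices by an edge when they lie within distance $d$ of each other, and checks for a perfect matching. For {\sc DR-$b$-FF} the distances are measured in $G$ and may be precomputed once via all-pairs shortest paths, whereas for {\sc DPR-$b$-FF} they are measured in the subgraph induced by the currently unburnt vertices, so the verifier recomputes them (for instance by a breadth-first search from each vertex of $D_t$) after each fire-spreading step. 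In either case the required matching is found in polynomial time by the algorithm of Edmonds~\cite{edmonds1965paths}. Having confirmed legality, the verifier spreads the fire to the unprotected neighbourhood of the burning set and proceeds to the next turn.

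Finally I would handle acceptance: once the simulation terminates, the verifier counts the unburnt vertices and accepts precisely when this count is at least $k$, which is exactly the statement that {\sc DRMVS}($G$,$F$;$b$,$d$) or {\sc DPRMVS}($G$,$F$;$b$,$d$) is at least $k$. Because there are $O\bigl(|V(G)|\bigr)$ turns and each turn requires only shortest-path computations together with a single bipartite matching, the whole verification runs in polynomial time. The one point that genuinely requires care is the path-restricted case, in which the relevant distances change as the fire advances and must therefore be recomputed against the shrinking unburnt subgraph at every turn; once one observes that each such recomputation is itself polynomial, membership of both {\sc DR-$b$-FF} and {\sc DPR-$b$-FF} in NP follows.
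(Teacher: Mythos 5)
Your proposal is correct and follows essentially the same route as the paper: the paper's proof likewise takes the sequence of defended vertices as the certificate and relies on the auxiliary-bipartite-graph perfect-matching test (described just before the theorem) to verify each move in polynomial time. You simply spell out the details (the turn bound, the per-turn recomputation of distances in the path-restricted case, and the final acceptance condition) that the paper leaves implicit.
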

 
We will also consider the notion of weighted instances of {\sc $d$-DPR-$b$-FF} and {\sc $d$-DR-$b$-FF} in order to incentivize the firefighters to defend certain vertices.

    \begin{problem}
        \problemtitle{{\sc Weighted-$d$-DPR-$b$-FF}}
        \probleminput{A graph $G$, a set of vertices $F \subseteq V(G)$, a weight function $w:V(G) \to \mathbb{R}$, and a positive integer $k$}
        \problemquestion{Can $b$ firefighters save a set of vertices whose weights sum to at least $k$ while moving at most distance $d$ and without passing through a burning vertex if a fire breaks out at all the vertices in $F$?}
    \end{problem}
    
    \begin{problem}
        \problemtitle{{\sc Weighted-$d$-DR-$b$-FF}}
        \probleminput{A graph $G$, a set of vertices $F \subseteq V(G)$, a weight function $w:V(G) \to \mathbb{R}$, and a positive integer $k$}
        \problemquestion{Can $b$ firefighters save a set of vertices whose weights sum to at least $k$ while moving at most distance $d$ if a fire breaks out at all the vertices in $F$?}
    \end{problem}

    While it is not the focus of this paper, the weighted variant of firefighter is of independent interest since it helps to unify the different firefighter decision problems. For example, we can immediately use this weighted version of the problem to phrase a distance-path-restricted version of {\sc $(S,b)$-Fire} as a {\sc Weighted-$d$-DPR-$b$-FF} problem since we can assign a weight of $n$ (the number of vertices) to the vertices in $S$ and a weight of $1$ to the rest of the vertices and ask if it is possible to save a weight of $n|S|$. We will later use a similar concept of assigning larger and larger weights to vertices to require certain sets of vertices to take priority over others.

    We now move on to a series of reductions which will constitute a proof of Theorem~\ref{thm:npcomplete}.

\begin{theorem} \label{thm:fixed_d_hard}
    {\sc Weighted-$d$-DPR-$1$-FF} and {\sc Weighted-$d$-DR-$1$-FF} are NP-complete for $d \geq 2$.
\end{theorem}

\begin{proof}
    We present a reduction from {\sc $3$-SAT}~\cite{karp1972reducibility} to {\sc Weighted-$d$-DR-$1$-FF}. The reduction for {\sc Weighted-$d$-DPR-$1$-FF} is identical since the firefighter will never be required to move through the fire.

    Given a $3$-CNF formula $\phi'$ with $n$ variables and $m'$ clauses we construct a new formula $\phi$ with $n$ variables and $m = n + m'$ clauses by adding a clause of the form $x_i \lor x_i \lor \lnot x_i$ for every variable $x_i$ and we will assume these are the first $n$ clauses of $\phi$. The formula $\phi$ is satisfied exactly when $\phi'$ is satisfied and is only polynomially larger. 

    Now from $\phi$ construct a graph $G$ as follows. First add three vertices labelled $I, v_1, v_2$ to the graph and add an edge from $I$ to $v_1$ and an edge from $I$ to $v_2$. For every pair of literals $x_i,\lnot x_i$ add two vertices and label them $x_i$ and $\lnot x_i$. These will be referred to as \emph{variable vertices}. Additionally we will add $d$ extra pairs labelled $x_{n+i},\lnot x_{n+i}$ for $1 \leq i \leq d$. These additional vertices will be treated as variable vertices but they will have no bearing on the variable assignment during our reduction. Add all four possible edges between $v_1,v_2$ and $x_1, \lnot x_1$ and for $1 \leq i \leq n+d-1$ add all four possible edges between $x_i,\lnot x_i$ and $x_{i+1},\lnot x_{i+1}$. For each pair $x_i, \lnot x_i$ add four internally disjoint paths with $i$ internal vertices, all of which have $I$ as one endpoint and two of which have $x_i$ as their other endpoint while the other two have $\lnot x_i$ as their second endpoint. Note that all the internal vertices are new vertices which were not already part of the vertex set and this will be true of any paths which we add. See Figure~\ref{fig:npcompleteness_1} for an example of this part of the construction. For each clause $C_j = y_1^j \lor y_2^j \lor y_3^j$ (where $y_1^j,y_2^j,y_3^j$ represent the three literals in the $j^{th}$ clause), add a $K_3$ and label the vertices $y_1^j, y_2^j, y_3^j$. We will refer to these vertices as \emph{clause vertices}. For $1 \leq j \leq m - 1$ add three vertex disjoint paths with $d-1$ vertices, each with one end adjacent to a distinct vertex in $C_j$ and the other end adjacent to all vertices in $C_{j+1}$. See Figure~\ref{fig:npcompleteness_2} for an example. If the literal $z \in \{x_i, \lnot x_i\}$ appears in the clause $C_j$ as $y_t^j$ then add two internally disjoint paths between the vertex corresponding to $z$ and the vertex $y_t^j$ with $n + d - i + j - 1$ internal vertices, and two internally disjoint paths between the vertex corresponding to $\lnot z$ and the vertex $y_t^j$ with $n + d - i + j$ internal vertices. See Figure~\ref{fig:npcompleteness_3} for an example. Additionally add all six possible edges between the vertices of $C_1$ and the vertices $x_{n+d}, \lnot x_{n+d}$. Finally, give $v_1$ a weight of $|V(G)|^3$, each variable vertex a weight of $|V(G)|^2$ (including the last $d$ of them), each clause vertex a weight of $|V(G)|$, and all remaining vertices a weight of $1$. Overall, this yields an instance of {\sc Weighted-$d$-DR-$1$-FF} with the graph $G$, $I$ as the initial burning vertex, and $|V(G)|^3 + (n + d)|V(G)|^2 + m|V(G)|$ as the total weight which must be saved.

\begin{figure}[H]
    \centering
\begin{tikzpicture}
    
    \draw (6,0) node {$\cdots$};
   
    \SetUpVertex[FillColor=white]
    
    \tikzset{VertexStyle/.append style={minimum size=22pt, inner sep=0.5pt}}
    
    \Vertex[x=-1,y=0,L={$I$},]{I}
    
    \Vertex[x=1,y=1,L={$v_1$},]{V1}
    \Vertex[x=1,y=-1,L={$v_2$},]{V2}
    
    \Vertex[x=2.5,y=1,L={$x_1$},]{X1}
    \Vertex[x=2.5,y=-1,L={$\lnot x_1$},]{NX1}

    \Vertex[x=4,y=1,L={$x_2$},]{X2}
    \Vertex[x=4,y=-1,L={$\lnot x_2$},]{NX2}
    
    \tikzset{VertexStyle/.append style={minimum size=12pt}}
    
    \Vertex[x=1,y=3.5,NoLabel=true,]{PX2A1}
    \Vertex[x=1,y=3,NoLabel=true,]{PX2B1}
    \Vertex[x=1,y=2.5,NoLabel=true,]{PX1A}
    \Vertex[x=1,y=2,NoLabel=true,]{PX1B}
    
    \Vertex[x=1,y=-3.5,NoLabel=true,]{PNX2A1}
    \Vertex[x=1,y=-3,NoLabel=true,]{PNX2B1}
    \Vertex[x=1,y=-2.5,NoLabel=true,]{PNX1A}
    \Vertex[x=1,y=-2,NoLabel=true,]{PNX1B}
    
    \Vertex[x=2.5,y=-3.5,NoLabel=true,]{PNX2A2}
    \Vertex[x=2.5,y=-3,NoLabel=true,]{PNX2B2}
    \Vertex[x=2.5,y=3.5,NoLabel=true,]{PX2A2}
    \Vertex[x=2.5,y=3,NoLabel=true,]{PX2B2}
    
    \Edge(I)(V1)
    \Edge(I)(V2)
    
    \Edge(I)(PX2A1)
    \Edge(I)(PX2B1)
    \Edge(I)(PNX2A1)
    \Edge(I)(PNX2B1)
    
    \Edge(PX2A2)(PX2A1)
    \Edge(PX2B2)(PX2B1)
    \Edge(PNX2A2)(PNX2A1)
    \Edge(PNX2B2)(PNX2B1)
    
    \Edge(PX2A2)(X2)
    \Edge(PX2B2)(X2)
    \Edge(PNX2A2)(NX2)
    \Edge(PNX2B2)(NX2)
    
    \Edge(I)(PX1A)
    \Edge(I)(PX1B)
    \Edge(I)(PNX1A)
    \Edge(I)(PNX1B)
    
    \Edge(X1)(PX1A)
    \Edge(X1)(PX1B)
    \Edge(NX1)(PNX1A)
    \Edge(NX1)(PNX1B)
    
    \Edge(X1)(V1)
    \Edge(X1)(V2)
    \Edge(NX1)(V1)
    \Edge(NX1)(V2)
    
    \Edge(X1)(X2)
    \Edge(X1)(NX2)
    \Edge(NX1)(X2)
    \Edge(NX1)(NX2)
   
\end{tikzpicture}
    \caption{An example of the initial segment of the variable gadget.}
    \label{fig:npcompleteness_1}
\end{figure}

\begin{figure}[H]
    \centering
\begin{tikzpicture}
    \SetUpVertex[FillColor=white]
    
    \tikzset{VertexStyle/.append style={minimum size=12pt, inner sep=0.5pt}}
    
    \Vertex[x=0,y=1,L={$y_1^1$},]{Y11}
    \Vertex[x=-1,y=0,L={$y_2^1$},]{Y21}
    \Vertex[x=0,y=-1,L={$y_3^1$},]{Y31}
    
    \Vertex[x=1,y=1,NoLabel=true]{Z11}
    \Vertex[x=1,y=0,NoLabel=true]{Z21}
    \Vertex[x=1,y=-1,NoLabel=true]{Z31}
    
    \Vertex[x=5,y=1,L={$y_1^2$},]{Y12}
    \Vertex[x=4,y=0,L={$y_2^2$},]{Y22}
    \Vertex[x=5,y=-1,L={$y_3^2$},]{Y32}
     
    \Edge(Y11)(Y21)
    \Edge(Y31)(Y21)
    \Edge(Y11)(Y31)
   
    \Edge(Y11)(Z11)
    \Edge(Y21)(Z21)
    \Edge(Y31)(Z31)

    \Edge(Z11)(Y12)
    \Edge(Z11)(Y22)
    \Edge(Z11)(Y32)

    \Edge(Z21)(Y12)
    \Edge(Z21)(Y22)
    \Edge(Z21)(Y32)

    \Edge(Z31)(Y12)
    \Edge(Z31)(Y22)
    \Edge(Z31)(Y32)

    \Edge(Y12)(Y22)
    \Edge(Y32)(Y22)
    \Edge(Y12)(Y32)
   
\end{tikzpicture}
    \caption{An example of the clause gadgets and how they connect with $d = 2$.}
    \label{fig:npcompleteness_2}
\end{figure}

\begin{figure}[H]
    \centering
\begin{tikzpicture}
    \SetUpVertex[FillColor=white]
    
    \tikzset{VertexStyle/.append style={minimum size=22pt, inner sep=0.5pt}}
    
    \Vertex[x=10,y=2,L={$\lnot x_1$},]{X1}
    \Vertex[x=10,y=-1,L={$x_1$},]{NX1}
    
    \Vertex[x=0,y=2,L={$y_1^2$},]{Y12}
    \Vertex[x=-2,y=0,L={$y_2^2$},]{Y22}
    \Vertex[x=0,y=-1,L={$y_3^2$},]{Y32}
    
    \Edge(Y12)(Y22)
    \Edge(Y32)(Y22)
    \Edge(Y12)(Y32)
    
    \tikzset{VertexStyle/.append style={minimum size=12pt, inner sep=0.5pt}}
    
    \Vertex[x=1.5,y=2,NoLabel=true]{P1a}
    \Vertex[x=3,y=2,NoLabel=true]{P1b}
    \Vertex[x=4.5,y=2,NoLabel=true]{P1c}
    \Vertex[x=6,y=2,NoLabel=true]{P1d}
    \Vertex[x=7.5,y=2,NoLabel=true]{P1e}
    
    \Edge(Y12)(P1a)
    \Edge(P1a)(P1b)
    \Edge(P1b)(P1c)
    \Edge(P1c)(P1d)
    \Edge(P1d)(P1e)
    \Edge(P1e)(X1)
    
    \Vertex[x=1.5,y=1,NoLabel=true]{P2a}
    \Vertex[x=3,y=1,NoLabel=true]{P2b}
    \Vertex[x=4.5,y=1,NoLabel=true]{P2c}
    \Vertex[x=6,y=1,NoLabel=true]{P2d}
    \Vertex[x=7.5,y=1,NoLabel=true]{P2e}
  
    \Edge(Y12)(P2a)
    \Edge(P2a)(P2b)
    \Edge(P2b)(P2c)
    \Edge(P2c)(P2d)
    \Edge(P2d)(P2e)
    \Edge(P2e)(X1)
    
    \Vertex[x=1.5,y=0,NoLabel=true]{P3a}
    \Vertex[x=3,y=0,NoLabel=true]{P3b}
    \Vertex[x=4.5,y=0,NoLabel=true]{P3c}
    \Vertex[x=6,y=0,NoLabel=true]{P3d}
    \Vertex[x=7.5,y=0,NoLabel=true]{P3e}
    \Vertex[x=9,y=0,NoLabel=true]{P3f}
  
    \Edge(Y12)(P3a)
    \Edge(P3a)(P3b)
    \Edge(P3b)(P3c)
    \Edge(P3c)(P3d)
    \Edge(P3d)(P3e)
    \Edge(P3e)(P3f)
    \Edge(P3f)(NX1)
    
    \Vertex[x=1.5,y=-1,NoLabel=true]{P4a}
    \Vertex[x=3,y=-1,NoLabel=true]{P4b}
    \Vertex[x=4.5,y=-1,NoLabel=true]{P4c}
    \Vertex[x=6,y=-1,NoLabel=true]{P4d}
    \Vertex[x=7.5,y=-1,NoLabel=true]{P4e}
    \Vertex[x=9,y=-1,NoLabel=true]{P4f}
  
    \Edge(Y12)(P4a)
    \Edge(P4a)(P4b)
    \Edge(P4b)(P4c)
    \Edge(P4c)(P4d)
    \Edge(P4d)(P4e)
    \Edge(P4e)(P4f)
    \Edge(P4f)(NX1)

\end{tikzpicture}
\caption{An example of the connection between the first variable and the second clause with $d = 2$ when the formula has $3$ variables. We assume here that $y_1^2$ is $\lnot x_1$.}
    \label{fig:npcompleteness_3}
\end{figure}

Consider the instance we constructed above and suppose $\phi$ has a satisfying assignment. Using this satisfying assignment, the firefighter can construct the following strategy. The firefighter starts at $v_1$, then during rounds $2 \leq i \leq 1+n+d$ the firefighter defends whichever of $x_{i-1}$ and $\lnot x_{i-1}$ should be set to true. Note that in rounds $n+1$ through $1 + n + d$ the firefighter's choice does not actually impact the variable assignment so they can choose either of $x_{i-1}$ and $\lnot x_{i-1}$. Since every clause is satisfied by this assignment, the firefighter can then move to some vertex in $C_1$ which corresponds to a true literal, then some vertex in $C_2$ which corresponds to a true literal and so on. This strategy will save at least $1 + n + d + m$ vertices with a total weight of at least $|V(G)|^3 + (n + d)|V(G)|^2 + m|V(G)|$.

We note at this point that the only way to stop a variable vertex from burning is for the firefighter to defend the vertex, or to spend extra rounds defending the paths from $I$ to the variable vertex. It is a similar case when defending a clause vertex. Thus the firefighter will need to defend each variable vertex which they do not want to burn if they wish to save exactly half of them. Similarly, if they want to save at least $m$ clause vertices they will need to defend the $m$ clause vertices which they do not want to burn. This notion in combination with the vertex weighting will force the firefighter to defend $v_1$, exactly $n+d$ variable vertices, and at least $m$ clause vertices in order to save a total weight of at least $|V(G)|^3 + (n + d)|V(G)|^2 + m|V(G)|$. Additionally, note that the construction of $G$ is designed so that in the absence of intervention by the firefighter, if a fire breaks out at $I$, then the pair of vertices $x_i,\lnot x_i$ will be on fire at the start of round $i + 1$, and the vertices of the $j^{th}$ clause will be on fire at the start of round $1 + n + d + j$. If the firefighter defends the vertex corresponding to $x_i$ then any vertex corresponding to $x_i$ in a clause gadget $C_j$ will burn in round $2 + n + d + j$ rather than round $1 + n + d + j$.

Now suppose the firefighter can save a total weight of at least $|V(G)|^3 + (n + d)|V(G)|^2 + m|V(G)|$ when the fire breaks out at $I$. By the choice of weights, the firefighter must have saved $v_1$, and at least $n + d$ variable vertices. Thus the firefighter must start on $v_1$ since otherwise $v_1$ will burn in the first round. At the end of round $n + d + 1$ all the variable vertices will either be burned or defended, so the firefighter can defend at most $n + d$ of these. Since the firefighter cannot save more than $n+d$ variable vertices they must also save at least $m$ clause vertices. If the firefighter does not defend at least one of $x_i, \lnot x_i$ in round $i+1$ then both will burn and the firefighter will not be able to later defend any vertices in the clause $x_i \lor x_i \lor \lnot x_i$. Note that this means that in round $1 + n + d$ the firefighter must be on one of $x_{n+d}$ and $\lnot x_{n+d}$ so the only clause vertices the firefighter can defend in round $2 + n + d$ are in the first clause. Note as well that if the firefighter's choices of which variable vertices to defend does not satisfy the first clause then all vertices in the first clause are burning in round $2 + n + d$, and in this case they will not be able to defend $m$ clause vertices. Thus if the firefighter saved a weight of at least $|V(G)|^3 + (n + d)|V(G)|^2 + m|V(G)|$ they must have been able to defend a vertex in the first clause. Similarly they can only reach clause vertices in $C_2$ in round $3 + n + d$ and they can only defend one if the firefighter defended a set of variables representing a satisfying assignment for the second clause. Continuing with this notion, the firefighter must be able to defend a vertex in every clause, which implies that all clauses are satisfied and the firefighter must have defended vertices corresponding to such an assignment.
\end{proof}

We now show that the above reduction can be extended to the unweighted setting.

\begin{theorem} \label{thm:noweights}
    {\sc $d$-DPR-$1$-FF} and {\sc $d$-DR-$1$-FF} are NP-complete for $d \geq 2$.
\end{theorem}

\begin{proof}
    We present a reduction from {\sc $3$-SAT} to {\sc $d$-DPR-$1$-FF} using the image of the reduction in Theorem~\ref{thm:fixed_d_hard} as a starting point. We present a second argument to show that the same reduction works in the case where there are no path restrictions.

    Let $\phi$ be a $3$-CNF formula modified as in the proof of Theorem~\ref{thm:fixed_d_hard} and construct $G$ from $\phi$ using the same procedure as before. For each $v \in V(G)$ let $w(v)$ be the positive integer weight assigned to $v$. To produce $G'$, take each vertex $v$ and add $w(v)-1$ leaves to $v$. For a vertex $v$ which had a weight greater than $1$ in $G$, let $L_v$ be the set of leaves which were added and made adjacent to $v$.

    We first focus on the path restricted case. To see that the firefighter saves at least $|V(G)|^3 + (n + d)|V(G)|^2 + m|V(G)|$ vertices on $G'$ when a fire breaks out at $I$, we first observe that the firefighter can defend the same sequence of vertices as they did on $G$. We will now show that it is never advantageous for the firefighter to defend one of these new leaves, thus any sequence they play on $G'$ can be translated to a sequence of moves in $G$. To see this observe that in order to defend a leaf $u \in L_v$ either $v$ is not burning so the firefighter could pass through $v$ to reach $u$, or the firefighter started on $u$. If the firefighter started on $u$ then either they could have started on $v$ and defended at least one extra vertex or $v$ was an initially burning vertex. Now note that if the firefighter can only save a positive number of vertices by defending a leaf and then letting the fire spread, then it must be that every vertex is either a leaf or an initially burning vertex. This degenerate case can be recognized in polynomial time and never occurs in the reduction described in Theorem~\ref{thm:fixed_d_hard}.

    Now suppose we are in the case where there are distance restrictions but not path restrictions. Again, the firefighter can simulate their strategy from the weighted instance to save at least $|V(G)|^3 + (n+d)|V(G)|^2 + m|V(G)|$, but we must show that they gain no advantage from defending the leaves. In order to defend fewer than $n + d$ variable vertices or fewer than $m$ clause vertices the firefighter must save at least $|V(G)|^2$ or $|V(G)|$ leaves which are adjacent to a burning vertex. This requires $|V(G)|^2$ and $|V(G)|$ different rounds respectively, which is much larger than the at most $2 + n + d + m$ rounds that the game lasts. Thus the firefighter cannot save more than $2 + n + d + m$ leaves adjacent to burning vertices and so they cannot save the additional $|V(G)|$ vertices required to avoid playing the best strategy from $G$.
\end{proof}

At this point we have shown NP-completeness for $b = 1$, but we will still need to extend this to the setting with more than one firefighter to prove Theorem~\ref{thm:npcomplete}. Towards this end, let $G \boxtimes H$ be the \emph{strong product} of $G$ and $H$. This graph has vertex set $V(G) \times V(H)$ and edges between pairs if they are at distance at most $1$ in both coordinates.

\begin{proof}[\textbf{Proof of Theorem~\ref{thm:npcomplete}}]
    We present a reduction from {\sc $3$-SAT} to {\sc $d$-DR-$b$-FF} using the image of the reduction in Theorem~\ref{thm:noweights} as a starting point. This proof never requires the firefighters to move through the fire so this is also a valid reduction for the path restricted case.
    
    Let $\phi$ be a $3$-CNF formula modified as in the proof of Theorem~\ref{thm:fixed_d_hard}, $G$ be constructed using the same procedure as in the proof of Theorem~\ref{thm:fixed_d_hard}, $G'$ be as in Theorem~\ref{thm:noweights} and $G''$ be $G' \boxtimes K_b$. We claim that the $b$ firefighters can save $b(|V(G)|^3 + (n + d)|V(G)|^2 + m|V(G)|)$ vertices on $G''$ if and only if the firefighter on $G'$ can save $|V(G)|^3 + (n + d)|V(G)|^2 + m|V(G)|$ vertices. First we note that if the firefighter defends $v$ in $G'$ then the $b$ firefighters can defend the copy of $K_b$ in $G''$ to have the same effect in terms of how the fire will spread. Similarly, if the firefighter moves from $u$ to $v$ in $G'$ then the $b$ firefighters can move from the copy of $K_b$ corresponding to $u$ to the copy of $K_b$ corresponding to $v$, as long as they have been following this mirroring strategy the whole game. Thus if the firefighter can save $|V(G)|^3 + (n + d)|V(G)|^2 + m|V(G)|$ vertices on $G'$ then the $b$ firefighters can save $b(|V(G)|^3 + (n + d)|V(G)|^2 + m|V(G)|)$ vertices on $G''$.

    Now suppose the $b$ firefighters can save $b(|V(G)|^3 + (n + d)|V(G)|^2 + m|V(G)|)$ vertices on $G''$. If this is the case then all the firefighters must have started on the copy of $K_b$ corresponding to $v_1$ since otherwise the fire will spread to at least one vertex in the clique and then to the $b(|V(G)|^3 - 1)$ leaves adjacent to the clique. In round $2 \leq i \leq n + 1$ if the firefighters do not completely defend a clique corresponding to $x_{i-1}$ or $\lnot x_{i-1}$ then once again the fire will spread to at least one vertex in the clique corresponding to $x_{i-1}$ and at least one vertex in the clique corresponding to $\lnot x_{i-1}$. As a result, the $b(|V(G)|^2 - 1)$ leaves adjacent to each of those cliques will burn. Additionally, if $z \in \{x_{i-1},\lnot x_{i-1}\}$ appears in the $j^{th}$ clause of $\phi$, then the corresponding clique will burn in round $1 + n + d + j$ instead of round $2 + n + d + j$. Thus the firefighters must always defend a clique corresponding to a literal in these rounds and they must defend exactly one of the cliques corresponding to $x_{i-1}$ and $\lnot x_{i-1}$ in round $2 \leq i \leq n + 1$. Note that this implies that the firefighters have created a truth assignment of the variables which we will use later to show that $\phi$ has a satisfying assignment. Since the firefighters must defend $n + d$ of the cliques corresponding to the variable vertices, they must also defend cliques corresponding to the dummy variables in the next $d$ rounds. Thus at the end of round $1 + n + d$ the firefighters must be on a clique corresponding to either $x_{n+d}$ or $\lnot x_{n+d}$. Now the only clause vertices the firefighters can reach in round $2 + n + d$ are in the first clause. The firefighters must still save a weight of $bm|V(G)|$ and they must do so over the course of the next $m$ rounds since at that point the fire will no longer be able to spread and the firefighters will no longer be able to defend any new vertices. The firefighters cannot save more than $b|V(G)|$ vertices in each of these rounds so they must save $b|V(G)|$ vertices in each round. The only way to do this is in round $1 + n + d + j$ is to have all the firefighters defend one of the cliques corresponding to a literal in the $j^{th}$ clause, and this is only possible if the clause is satisfied by the previously mentioned assignment of the variables. Thus each clause is satisfied and so there must be a satisfying assignment.
\end{proof}

A natural question which arises here is which instances of {\sc $d$-DPR-$b$-FF} and {\sc $d$-DR-$b$-FF} are tractable. It is known that {\sc $b$-Firefighter} is NP-complete even when restricted to trees of maximum degree $b + 2$; however for {\sc $d$-DPR-$b$-FF} and {\sc $d$-DR-$b$-FF}, the problem is much easier to solve on trees.

\begin{lemma} \label{lemma:polytrees}
    {\sc $d$-DPR-$b$-FF} is solvable in logarithmic space on trees whenever $|F| = 1$ regardless of the value of $d$. Similarly, {\sc $d$-DR-$b$-FF} is solvable in logarithmic space on trees whenever $|F|=1$ and $d \leq 2$.
\end{lemma}

\begin{proof}
    Let $T$ be a tree and root the tree at the initial burning vertex $u$. Label the neighbours of $u$ as $v_1,\ldots, v_{d(u)}$. Define the quantities $S_{v_i}$ to be the number of vertices in the component of $T \setminus u$ containing $v_i$. We claim that the best possible strategy is to defend the $b$ vertices in $\{v_1,\ldots,v_{d(u)}\}$ which have the largest $S_{v_i}$ values. To see this, we first note that any firefighter can only defend vertices in one component of $T \setminus u$ since moving to another component would require moving through $u$ and a burning neighbour of $u$. Now, we see that if the firefighter defends one of these $v_i$ then they save all vertices in that component of $T \setminus u$ and since they certainly cannot do better than saving all of the vertices in the chosen component this will always be optimal. 

    We now show that the $b$ largest $S_{v_i}$ can be calculated in logarithmic space. First, note that for a fixed $i$, $S_{v_i}$ can be computed by iterating over all vertices $w \in V(T) \setminus \{u\}$ and using Reingold's algorithm~\cite{reingold2008logspace} to check if $v_i$ and $w$ are in the same component of $T \setminus u$. Since the forest $T \setminus u$ can be computed by a logspace transducer, $S_{v_i}$ can be computed in logarithmic space. The remainder of the algorithm is as follows. For $1 \leq i \leq b$, compute $S_{v_i}$ and store it in a sorted list. For $b + 1 \leq i \leq d(u)$, compute $S_{v_i}$, insert it into the list of length $b$ so that the list remains sorted, then delete the smallest element in the list. Since the list is of constant length and integer comparison is trivially solvable in logarithmic space, this does not violate our logarithmic space bound. The algorithm uses at most $(b + 1) \log |V(T)|$ space to store the computed $S_{v_i}$'s and uses $\mathcal{O}(\log |V(T)|)$ space to compute each $S_{v_i}$ and reuses this space for each $v_i$. Thus the algorithm runs in logarithmic space.
\end{proof}

Note that it is widely believed that L (the set of problems decidable in logarithmic space), is a strict subset of P\footnote{Much stronger separations are believed to be true, see~\cite{greenlaw1995parallel} for example.} and that P is a strict subset of NP, so {\sc $d$-DPR-$b$-FF} being in L on trees while {\sc $b$-Firefighter} is NP-complete represents a significant difference in their complexities.

\section{Integer Programming Solutions}\label{sec:integerprogram}

A natural question to ask about any NP-complete problem is whether there are ways to solve it using `relatively efficient' methods. Solving these problems by transforming them into something that can be passed as input to an integer program solver or a SAT solver allows us to take advantage of very well optimized pieces of software like HiGHS~\cite{huangfu2018parallelizing}. Of course, since our problem is in NP such a method for solving our problem certainly exists, but using something like a more general reduction via {\sc $3$-SAT} could make it more difficult to relax specific conditions of the problem. For example, since the role of each constraint will be made clear here, we can relax the constraints involving vertices being defended as in~\cite{hartke2006attempting} and keep the constraints involving the distance restrictions as integer constraints in order to obtain a mixed integer program for a fractional version of the distance-restricted firefighter problem. Having access to a method for finding solutions where we can easily recover the firefighter strategy also allows us to formulate conjectures such as Conjecture~\ref{con:18grid} which are not necessarily obvious without solving many instances of {\sc $d$-DR-$b$-FF}. 

The original {\sc MVS} function was shown in~\cite{develin2007fire} to be computable by a $0,1$-integer program which is depicted in Figure~\ref{fig:develinhartkeprogram}. The program encodes whether or not a vertex is burned at a particular point in time using the $b_{x,t}$ variables and similarly encodes whether or not a vertex is defended using the $d_{x,t}$ values. Note that we use a capital $B$ here to denote the number of firefighters so it is disambiguated from the $b_{x,t}$ values and we use $F$ to denote the set of initially burning vertices. 

    \begin{figure}[H]
        \centering 
    \begin{align*}
        &\text{Minimize $\sum_{x \in V(G)} b_{x,T}$, subject to:}\\
        &b_{x,t} \geq b_{x,t-1} && x \in V(G), 1\leq t \leq T\\
        &d_{x,t} \geq d_{x,t-1} && x \in V(G), 1\leq t \leq T\\
        &b_{x,t} + d_{x,t} \geq b_{y,t-1} && x \in V(G), y \in N(x), 1 \leq t \leq T \\
        &b_{x,t} + d_{x,t} \leq 1 && x \in V(G), 1 \leq t \leq T \\
        &\sum_{y \in N(x)} b_{y,t-1} \geq b_{x,t} && x \in V(G), 1 \leq t \leq T \\
        &\sum_{x \in V(G)} (d_{x,t} - d_{x,t-1}) \leq B && 1 \leq t \leq T \\
        &d_{x,0} = 0 && x \in V(G) \\
        &b_{x,0} =   \begin{cases}
                        1 & \text{ if $x \in F$}\\
                        0 & \text{ otherwise}
                    \end{cases} && x \in V(G) \\
        &b_{x,t},d_{x,t} \in \{0,1\} && x \in V(G), 0 \leq t \leq T
    \end{align*}
    \caption{A $0,1$-integer program for solving firefighter {\sc MVS}, due to Develin and Hartke~\cite{develin2007fire}.}
    \label{fig:develinhartkeprogram}
\end{figure}

    While this program can compute the original {\sc MVS} function, its encoding of the information loses certain features that are needed to impose distance and path restrictions on the problem. In particular, if a firefighter moves onto a previously defended vertex, then the set of defended vertices does not change, and that firefighter's current position is now unknown. Thus on the next turn it is impossible to validate that firefighter's move since it is impossible to know exactly which vertex the firefighter was on.

    In order to construct an extension to the Develin and Hartke program that computes {\sc DRMVS}, we first have to re-encode the firefighters' positions in a way that solves the aforementioned information loss problem, but also in a way which lends itself well to linear transformations. Our solution is to encode the firefighters' positions on any given turn as a one-hot\footnote{A \textit{one-hot} encoding refers to a vector where all entries but one are zero and the remaining entry is a $1$.} encoded vector of length $\binom{|V(G)| + B - 1}{B}$, as that is the number of weak compositions\footnote{A \textit{weak composition} of $n$ into $k$ parts is an ordered sum of $k$ non-negative integers which sum to $n$.} of $B$ into $|V(G)|$ parts and we can order the compositions lexicographically. Note that we consider $B$ to be a constant and so the quantity $\binom{|V(G)| + B - 1}{B}$ is only polynomially larger than $|V(G)|$. Specifically, in the case where $B = 1$, $\binom{|V(G)| + B - 1}{B} = |V(G)|$ and so one position in the vector has value $1$ while the rest have value $0$. We use this approach because weak compositions are in correspondence with positions the firefighters can occupy since each element of the composition corresponds to a vertex of the graph and the quantity corresponds to the number of firefighters on that vertex. Thus if the firefighters' positions at time $t$ correspond to the $i^{th}$ weak composition we represent the position with the one-hot encoded vector with a $1$ in the $i^{th}$ coordinate and we label this vector $p_t$. 

    As an example, consider a path on three vertices and all six possible placements of two firefighters on this graph sorted in lexicographical order:

    \[
        \begin{bmatrix} 2\\ 0\\ 0\end{bmatrix}, \begin{bmatrix} 1\\ 1\\ 0\end{bmatrix}, \begin{bmatrix} 1\\ 0\\ 1\end{bmatrix}, \begin{bmatrix} 0\\ 2\\ 0\end{bmatrix}, \begin{bmatrix} 0\\ 1\\ 1\end{bmatrix}, \begin{bmatrix} 0\\ 0\\ 2\end{bmatrix}.
    \]

    Thus if there are two firefighters on the second vertex and no firefighters on the other two vertices we would represent the firefighters' position as the vector with a $1$ in the fourth position and $0$'s in all other positions.

    We now need to make these position vectors force the $d_{x,t}$ to take on their corresponding values. The first step in doing this is to extract which vertices are occupied from the one-hot encoded position vector, which we do by constructing a matrix $A$ with $|V(G)|$ rows and $\binom{|V(G)| + B - 1}{B}$ columns where the $i^{th}$ entry in the $j^{th}$ row is $1$ if the $j^{th}$ entry of the weak composition corresponding to the $i^{th}$ position is positive and the entry is $0$ otherwise. Using our previous example this would give the matrix:\[
        A = \begin{bmatrix} 1 & 1 & 1 & 0 & 0 & 0\\ 0 & 1 & 0 & 1 & 1 & 0\\ 0 & 0 & 1 & 0 & 1 & 1\end{bmatrix}.
    \]

    Multiplication by $A$ takes a position vector and yields a binary vector where the $i^{th}$ entry being $1$ implies that the $i^{th}$ vertex is occupied, and thus the corresponding $d_{x,t}$ needs to be $1$ as well. In order to set the $d_{x,t}$ value we want to take the maximum of all the entries corresponding to vertex $x$ in the vectors $A p_\tau$ where $\tau \leq t$. This is achieved in two parts. First set the sum of all these values as an upper bound on $d_{x,t}$ since if all of these values are zero then $d_{x,t}$ will also be zero. Secondly set each of the values individually as a lower bound on $d_{x,t}$, then if any of the values are $1$, $d_{x,t}$ will be $1$ as well.

    So we can now associate a one-hot encoded position vector to each time step, and this can be used to construct a bilinear form which determines if the move from the positions at time $t$ to the positions at time $t + 1$ is valid. Define $M_d$ to be a $\binom{|V(G)| + B - 1}{B}$ square matrix where the $i^{th}$ entry in the $j^{th}$ row is $0$ if there is a valid move from position $i$ to position $j$ and $1$ if that is not the case. Using our previous example with $d = 1$ we get the following matrix:\[
        M_1 = \begin{bmatrix} 1 & 1 & 0 & 1 & 0 & 0\\ 1 & 1 & 1 & 1 & 1 & 0\\ 0 & 1 & 1 & 1 & 1 & 0\\ 1 & 1 & 1 & 1 & 1 & 1\\ 0 & 1 & 1 & 1 & 1 & 1\\ 0 & 0 & 0 & 1 & 1 & 1\end{bmatrix}.
    \]

    This allows us to use $p_{t}^{T}M_{d}p_{t+1}$ to determine if the given move is valid, and we can in fact replace the bilinear form $M_d$ with a dot product by instead encoding the vector $a_t = p_t \otimes p_{t+1}$ for each time step and noting that we can recover $p_t$ from $a_t$ using a projection map $\mu$. This does also now require us to ensure that the projection to the second factor of the tensor product in $a_t$ and the projection to the first factor of the tensor product in $a_{t+1}$ are in fact equal. Figure~\ref{fig:programextension} depicts the extension to the Develin and Hartke program which computes {\sc DRMVS}.

    \begin{figure}[H]
    \begin{minipage}{0.45\textwidth}
    \begin{align*}
        N &= \binom{n + B - 1}{B}\\
        p_t &\in \mathbb{F}_2^N\\
        p_t^i &= \begin{cases}
            1 & \text{If $S_i$ is occupied at time $t$}\\
            0 & \text{Otherwise}\\
            \end{cases}\\
        a_t &= p_t \otimes p_{t+1} \in \mathbb{F}_2^N \otimes \mathbb{F}_2^N\\
        \mu_1 &: \mathbb{F}_2^N \otimes \mathbb{F}_2^N \to \mathbb{F}_2^N\\
        \mu_1(a_t) &= p_t\\
        \mu_2 &: \mathbb{F}_2^N \otimes \mathbb{F}_2^N \to \mathbb{F}_2^N\\
        \mu_2(a_t) &= p_{t+1}\\
        M_d &: \mathbb{F}_2^N \otimes \mathbb{F}_2^N \to \{0,1\}\\
        M_d(p_t \otimes p_{t+1}) &= \begin{cases}
          0 & \text{If there is a valid move}\\
          1 & \text{Otherwise}\\
        \end{cases}\\
    \end{align*}
\end{minipage}
    \begin{minipage}{0.45\textwidth}
    \begin{align*}
        \mathbb{1} \cdot a_{t'}^i &= 1\text{, }t' > 1\\
        \mathbb{1} \cdot a_{0}^i &= 0\\
        \mathbb{1} \cdot p_t^i &= 1\\
        \mu_2(a_t) &= \mu_1(a_{t+1})\\
        M_d(a_t) &\leq 0\\
        d_{x,t} &\geq \mu_1(a_{t'})[x]\text{, } t' \leq t\\
        d_{x,t} &\leq \sum_{t' \leq t} \mu_1(a_{t'})[x]\\    
    \end{align*}
\end{minipage}
\caption{An extension to the $0,1$-integer program which computes {\sc DRMVS}.}
\label{fig:programextension}
\end{figure}

    We can see that this implements all the pieces discussed in the previous paragraphs. The $a_t$ values are our tensor products of position vectors, the $\mu_i$ projection maps ensure that $a_t$ and $a_{t+1}$ are compatible, the transformation $M_d$ validates moves, and the last two lines force $d_{x,t}$ to take on the values corresponding to $p_t$.

    It is relatively straightforward to construct a program which takes a {\sc DRMVS} problem instance as input and outputs the corresponding linear program. We used such a program as well as the HiGHS solver~\cite{huangfu2018parallelizing} to solve a $13 \times 13$ square grid (i.e. $P_{13}\:\square\:P_{13}$) with $d=3$ and a single firefighter with a runtime of approximately $28$ days on a $6$-core Zen+ desktop platform. We also used this to solve a large number of instances on square grids with $d=1$ which led to Lemma~\ref{lem:18grid} and the related Conjecture~\ref{con:18grid}. Note that we label the vertices using the set $[n] \times [n]$ and $(i,j)$ is adjacent to $(x,y)$ if $i = x$ and $|j - y| = 1$ or vice versa.

    \begin{lemma} \label{lem:18grid}
        \[\lim_{n \to \infty} \frac{\textsc{DRMVS}\left(P_n \:\square\: P_n,\left(\left\lceil\frac{n}{2}\right\rceil,\left\lceil\frac{n}{2}\right\rceil\right);1,1\right)}{n^2} \geq \frac18\]
    \end{lemma}

    \begin{proof}
        To show this, we show $\textsc{DRMVS}\left(P_n \:\square\: P_n,\left(\left\lceil\frac{n}{2}\right\rceil,\left\lceil\frac{n}{2}\right\rceil\right);1,1\right) \geq \left\lceil \frac{\left\lceil \frac{n}{2}\right\rceil - 1}{2}\right\rceil \left\lceil\frac{n}{2}\right\rceil$. Consider the strategy where the firefighter starts at $\left(\left\lceil\frac{n}{2}\right\rceil,1\right)$ and defends the vertices: 
        \[\left(\left\lceil\frac{n}{2}\right\rceil,2\right), \left(\left\lceil\frac{n}{2}\right\rceil,3\right), \ldots, \left(\left\lceil\frac{n}{2}\right\rceil,\left\lceil \frac{\left\lceil \frac{n}{2}\right\rceil - 1}{2}\right\rceil\right).\] 
        At this point the vertex $\left(\left\lceil\frac{n}{2}\right\rceil,\left\lceil \frac{\left\lceil \frac{n}{2}\right\rceil - 1}{2}\right\rceil + 1\right)$ is burning and the firefighter now defends the vertices: 
        \[\left(\left\lceil\frac{n}{2}\right\rceil + 1,\left\lceil \frac{\left\lceil \frac{n}{2}\right\rceil - 1}{2}\right\rceil\right), \left(\left\lceil\frac{n}{2}\right\rceil + 2,\left\lceil \frac{\left\lceil \frac{n}{2}\right\rceil - 1}{2}\right\rceil\right), \ldots, \left(n,\left\lceil \frac{\left\lceil \frac{n}{2}\right\rceil - 1}{2}\right\rceil\right),\] 
        which saves the desired number of vertices and the result follows.
    \end{proof}

    We conjecture that the lower bound in Lemma~\ref{lem:18grid} is the exact value of the limit.

    \begin{conjecture} \label{con:18grid}
        \[\lim_{n \to \infty} \frac{\textsc{DRMVS}\left(P_n \:\square\: P_n,\left(\left\lceil\frac{n}{2}\right\rceil,\left\lceil\frac{n}{2}\right\rceil\right);1,1\right)}{n^2} = \frac18\]
    \end{conjecture}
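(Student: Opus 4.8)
The plan is to establish matching asymptotic bounds, showing that $\textsc{DRMVS}\!\left(P_n \square P_n, (\lceil n/2\rceil,\lceil n/2\rceil);1,1\right) = \tfrac18 n^2 + o(n^2)$; since the paper states this only as a conjecture, what follows is a proposed route rather than a finished argument. The organizing device is to pass to the rotated coordinates $u = x+y$, $v = x-y$. In these coordinates the set of vertices burning at time $t$ (absent any defence) is the axis-aligned square $\max(|u|,|v|)\le t$, so the fire reaches a vertex $p$ at time $F(p) = |x-c|+|y-c|$, its $\ell_1$-distance from the source $(c,c)$. With $d=1$ a single firefighter traces a connected lattice path $p_0,p_1,\dots$, and defending $p_i$ before the fire arrives imposes the timing constraint $F(p_i)\ge i$. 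A set of vertices is saved precisely when this path, together with the boundary of the grid, separates it from the source.

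For the lower bound I would exhibit an explicit strategy that seals off a corner rectangle. Place the firefighter initially at the midpoint of the top edge, at $\ell_1$-distance $\tfrac n2$ from the source, and first descend the central column toward the source, defending each vertex; at step $s$ the firefighter sits at distance $\tfrac n2 - s$ while the fire front is at radius $s$, so the descent is legal exactly until $s=\tfrac n4$, at which point it reaches the vertex at distance $\tfrac n4$ directly above the source. It then turns and runs horizontally to the side edge along that row; there the vertex defended at step $\tfrac n4 + j$ has $F = \tfrac n4 + j$, so the firefighter stays exactly abreast of the fire and completes the arm by step $\tfrac{3n}4 < n$. This $L$-shaped wall, with the top and side edges, encloses the $\tfrac n2 \times \tfrac n4$ rectangle, which is saved and contains $\tfrac18 n^2 - O(n)$ vertices. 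The only care needed is the $O(n)$ rounding from the integer centre and a one-step convention for who moves first each turn, both absorbed into the error term.

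The hard part is the matching upper bound, that no strategy saves more than $\tfrac18 n^2 + o(n^2)$, and this is where essentially all of the difficulty lies. I would again work in rotated coordinates and argue by slicing. The defended vertices form a single connected path of length at most $n$ (since $F(p_i)\ge i$ and $\max_p F(p)=n$), so the saved region is exactly what this one path cuts off together with the boundary. On a fixed horizontal slice $v=v_0$ the fire sweeps outward at unit speed, so the saved length on that slice is governed by the positions and defence-times of the path vertices lying on it; summing saved length over slices recovers the area. Feeding in the timing constraint $F\ge i$ together with the Lipschitz bound $|\Delta v|\le 1$ per step reduces the question, for a monotone wall cutting off a corner, to a one-dimensional optimization over wall profiles $h(v)$ subject to $h(v)\ge v - v_P$ on the fire-critical range; carrying this out shows the maximum saved area in that family is exactly $\tfrac18 n^2$, attained by the rectangle above.

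The genuine obstacle is upgrading this from the monotone single-corner case to all strategies. One must rule out non-monotone or meandering walls, walls that exploit an edge rather than a corner, and strategies that attempt to save several disjoint pieces with one connected path, and show that none of these beats the corner rectangle. I expect this to require a structural lemma asserting that an optimal defended path may be taken monotone and corner-directed (perhaps by an exchange or straightening argument), followed by a careful discrete-to-continuous comparison that controls the $o(n^2)$ discrepancy between the lattice problem and the continuous slice optimization. Making that reduction rigorous, rather than the extremal computation itself, is the step I expect to be decisive.
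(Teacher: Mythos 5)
The statement you are addressing is stated in the paper only as a conjecture: the authors' entire supporting evidence is integer-programming computations on small grids plus the observation that one can always save an $\frac n4 \times \frac n2$ corner rectangle, and they explicitly note that optimality for large $n$ remains open. Your lower-bound construction is exactly that corner-rectangle strategy (descend toward the source until the fronts meet at distance $\frac n4$, then run laterally to the boundary), and modulo the off-by-one conventions you flag, it is correct and gives $\textsc{DRMVS} \geq \frac18 n^2 - O(n)$. So the lower half of your argument reproduces the paper's own heuristic and is sound.

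The upper bound, however, is where the entire content of the conjecture lives, and your proposal does not establish it; you acknowledge this, but the gaps are worth naming precisely because some of your stated reductions are not merely unproven but invalid as stated. First, the timing constraint $F(p_i) \geq i$, where $F$ is the free $\ell_1$ arrival time, is not a necessary condition on the defended sequence: once a partial wall obstructs the fire, the actual arrival time at a vertex can strictly exceed its $\ell_1$ distance from the source (indeed the firefighter may legally walk into territory that the unobstructed fire would already have consumed), so your one-dimensional optimization over wall profiles $h(v) \geq v - v_P$ ranges over the wrong feasible set and cannot by itself certify an upper bound. Second, the claim that the defended path has length at most $n$ rests on the same conflation; the game can last well beyond $n$ turns when the fire must detour around walls, so the firefighter may defend many more than $n$ vertices. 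Third, the reduction of arbitrary strategies to monotone, corner-directed walls is exactly the structural statement one would need to prove and is asserted only as a hoped-for exchange argument. In short: your proposal is a reasonable research plan whose easy half matches the paper's evidence, but it is not a proof, and the decisive step --- ruling out every non-corner, non-monotone, or fire-delaying strategy --- remains entirely open, just as the paper says.
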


    It would also be possible to extend this program to compute {\sc DPRMVS}; however the size of the program would grow much more quickly and as such we only give a high level overview. The idea is that for each pair of positions and each value of $d$ there is some finite set of `connectors', which are sets of vertices the firefighters can move through to get from one position to the other. By encoding all these connector sets for a pair of positions in a binary vector, and also encoding which of them contain a burning vertex in another vector, their dot product can encode whether or not any of the sets are unburned. This can all be done using similar mapping techniques as with the linear program for computing {\sc DRMVS} and the connectors can be computed in polynomial time using a variation of Djikstra's algorithm~\cite{djikstra1959algorithm}.

\section{Expected Damage}\label{sec:expectedburn}

Another area of interest for firefighting is determining the \textit{expected damage} sustained by a graph given a fixed number $q$ of initial burning vertices and a fixed number $b$ of firefighters. This is equivalent to computing the \textit{surviving rate} defined in~\cite{cai2009surviving}. The expected damage is denoted as $E(G;q,b)$ and is computed as:
    
    \begin{equation*}
        E(G;q,b) = \sum_{F \subset V(G)\text{, }|F|=q} \frac{|V(G)| - \text{{\sc MVS}}(G,F;b)}{\binom{|V(G)|}{q}}
    \end{equation*}

    The expected damage can be extended to the distance-restricted variant by replacing {\sc MVS} by {\sc DRMVS}, and to the distance path restricted variant by replacing {\sc MVS} by {\sc DPRMVS}. We denote these expected damage functions by $E(G;q,b,d)$ and $E_{pr}(G;q,b,d)$, respectively.

    A classic problem for expected damage is to determine which connected graphs of order $n$ minimize the expected damage~\cite{crosby2005designing,finbow2000minimizing}. These are typically referred to as \textit{optimal graphs}. For the original problem, the optimal graphs with a single fire and firefighter were completely characterized in~\cite{finbow2000minimizing} and the optimal graphs with two fires and two firefighters were completely characterized in~\cite{crosby2005designing}. For a single fire and a single firefighter the optimal graphs are stars and caterpillars where any vertices of degree at least three are at least a distance of three apart. Since the optimal strategy on $K_{1,n-1}$ only involves a single placement of the firefighter we can achieve the same minimum in the distance-restricted and distance-path-restricted games. Thus with a single fire and a single firefighter we can narrow our search for optimal graphs from all graphs to graphs which are optimal in the original game. We will soon see that this is not possible with two fires and two firefighters, and in fact determining the optimal graphs when there are distance and/or path restrictions can be much more difficult than in the original version of firefighter.

    \begin{theorem}
        When $d \leq 2$ or there are path restrictions the optimal graphs with one fire and one firefighter are stars and graphs formed by taking a star and adding leaves to one leaf of the star.
    \end{theorem}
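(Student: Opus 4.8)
The plan is to convert the statement into an optimization over total vertices saved. Using the definition of expected damage with $q=1$, I would first record the identity $E(G;1,1,d) = n - \tfrac1n \sum_{v \in V(G)} \textsc{DRMVS}(G,\{v\};1,d)$, and the analogue with $\textsc{DPRMVS}$ for $E_{pr}$, where $n=|V(G)|$. Thus minimizing expected damage is the same as maximizing the total number of vertices saved over all single-vertex fires. Next I would establish a universal per-vertex bound: for a fire at $v$ the firefighter places once on turn $0$ and the fire then reaches every unprotected neighbour of $v$, so at least $\deg(v)$ vertices burn; hence $\textsc{DRMVS}(G,\{v\};1,d) \le n - \deg(v)$, and the same holds for $\textsc{DPRMVS}$ since restricting the firefighter only decreases the number saved. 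Summing over $v$ gives $\sum_v \textsc{DRMVS}(G,\{v\};1,d) \le n^2 - 2|E(G)| \le n^2 - 2(n-1)$, because a connected graph has at least $n-1$ edges. This yields $E(G;1,1,d) \ge \tfrac{2(n-1)}{n}$ for every connected $G$ (and identically for $E_{pr}$), with equality precisely when $G$ is a tree \emph{and} $\textsc{DRMVS}(G,\{v\};1,d) = n - \deg(v)$ for every vertex $v$.

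Next I would verify that stars and the described double stars attain the bound. For a star, optimal play against any fire is a single placement (protect the centre if the fire is at a leaf, protect any leaf if the fire is at the centre), so the distance and path restrictions are inert and the bound is met at every vertex, exactly the single-placement phenomenon already noted in the excerpt. For a graph obtained from a star by adding leaves to one leaf, which is a double star with internal vertices $x,y$, a fire at any leaf is contained by protecting its unique internal neighbour, while a fire at $x$ (resp.\ $y$) is handled by the single placement at $y$ (resp.\ $x$), which shields the entire opposite branch; in each case exactly $\deg(v)$ vertices burn, so the bound is tight. Hence every star and double star achieves $E = \tfrac{2(n-1)}{n}$ and is optimal.

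The substance of the argument is the converse: a tree meeting $\textsc{DRMVS}(G,\{v\};1,d) = n - \deg(v)$ at every vertex must be a star or a double star. Since the internal (non-leaf) vertices of a tree induce a subtree, it suffices to show that every internal vertex has at most one internal neighbour, for then that subtree has maximum degree $1$ and so has at most two vertices, giving a star or a double star. I would prove the contrapositive through a \emph{confinement lemma}: if an internal vertex $v$ has two internal neighbours, then $\textsc{DRMVS}(G,\{v\};1,d) < n - \deg(v)$ whenever $d \le 2$, and likewise for $\textsc{DPRMVS}$ with any $d$. To attain burn $=\deg(v)$ the turn-$0$ placement must be on a neighbour $w_1$ of $v$ (otherwise all $\deg(v)$ neighbours ignite and strictly more than $\deg(v)$ vertices burn). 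After that placement the firefighter is trapped inside the branch rooted at $w_1$: in the path-restricted game the only route to any other branch runs through the burning vertex $v$, and when $d \le 2$ every unburnt vertex outside $w_1$'s branch lies at distance at least $3$ from $w_1$ and from every vertex of that branch. Consequently the second internal neighbour of $v$ ignites on turn $1$ and one of its children ignites on turn $2$ unprotected, so at least $\deg(v)+1$ vertices burn, contradicting tightness.

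The confinement lemma is the crux, and it is exactly where the hypothesis $d \le 2$ (or the presence of path restrictions) is indispensable: for $d \ge 3$ in the purely distance-restricted game the firefighter can leap across the burning vertex $v$ from $w_1$ directly into another branch, so the per-vertex bound can be met by longer caterpillars, matching the broader family of optimal graphs known for the unrestricted problem. The main obstacle I anticipate is ruling out \emph{every} feasible firefighter strategy rather than only the natural one; I expect the cleanest route is an induction on the turn number showing that the firefighter's protected vertex always remains within the closed branch of $w_1$, using that distances to all other branches exceed $2$ and that the cut vertex $v$ is burning throughout.
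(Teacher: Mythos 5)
Your proposal is correct, but it takes a genuinely different route from the paper. The paper first computes the star's expected damage $2-\frac{2}{n}$, observes that this value is also attainable under the restrictions, and then imports the known characterization of optimal graphs for the \emph{unrestricted} game (stars and caterpillars whose vertices of degree at least three are pairwise at distance at least three) to narrow the search; it then eliminates all remaining caterpillars by a case analysis parameterized by the spine length $p$ and the number $R$ of spine vertices carrying leaves, counting burnt vertices configuration by configuration. You instead give a self-contained argument: the degree-sum bound $\sum_v \textsc{DRMVS}(G,\{v\};1,d) \leq n^2 - 2|E(G)| \leq n^2 - 2(n-1)$ establishes the lower bound $2-\frac{2}{n}$ directly, equality forces $G$ to be a tree with exactly $\deg(v)$ vertices burnt for every source $v$, and your confinement lemma (an internal vertex with two internal neighbours forces $\deg(v)+1$ burnt vertices, because after the mandatory turn-$0$ placement on a neighbour of $v$ the firefighter cannot reach the distance-$3$ children of the second internal neighbour when $d\leq 2$, nor cross the burning cut vertex $v$ under path restrictions) pins down the extremal trees as exactly those whose internal vertices induce a subtree of maximum degree one, i.e.\ stars and double stars. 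Your approach buys independence from the prior characterization and isolates precisely where the hypothesis $d\leq 2$ or path restrictions enters; the paper's approach buys a shorter write-up by leaning on known results, at the cost of a longer ad hoc caterpillar case analysis. Your one acknowledged gap --- ruling out every feasible strategy rather than just the natural one --- is genuinely the only thing left to write out, and it is a short finite case check (turn-$0$ placement on $w_1$, on $w_2$, on a leaf neighbour, or elsewhere, each yielding at least $\deg(v)+1$ burnt vertices), so it does not threaten the argument.
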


    \begin{proof}
        First we show that the optimal expected value is $2-\frac2n$. Consider the star on $n$ vertices. When the fire starts on one of the $n-1$ leaves only one vertex burns and when the fire starts in the center $n-1$ vertices burn. Thus in total $\left(n-1\right) + \left(n-1\right)$ vertices burn, and we then divide by $\binom{n}{1}$ to get that the expected damage is $\frac{\left(n-1\right) + \left(n-1\right)}{n} = 2-\frac2n$. This also works for $d \leq 2$ or if there are path restrictions. This shows that the optimal graphs with path restrictions or $d \leq 2$ must be contained in the set of optimal graphs for the original game. This is true since we certainly cannot have a smaller expected damage in our variants of the game, but as we have just shown we can achieve the same value.

        We now show that the remaining graphs from the theorem statement are optimal. Suppose we have an edge with $\ell$ leaves on one end of the edge and $t$ leaves on the other end. If the fire starts on one of the $\ell + t$ leaves only one vertex burns, if it starts on one of the higher degree vertices either $\ell + 1$ or $t + 1$ vertices burn. Thus we get $\ell + t + \ell + 1 + t + 1$ vertices burning in total, and $\ell + t = n - 2$ so we get $2\left(n-2\right) + 2 = 2n - 2$ burning in total. We take that and divide by $n$ to get $2 - \frac2n$.

        Now we show that any other caterpillar graph is non-optimal. Here we take advantage of the fact that since the firefighters are either path-restricted or can only move a distance less than or equal to $2$ they can only defend once and then let the fire spread. This is similar to the technique used in the proof of Lemma~\ref{lemma:polytrees}. First it is necessary to define some quantities relating to our caterpillars. First we require that the spine of our caterpillar only contains vertices of degree greater than $1$ and we define the number of vertices on the spine to be $p$. Now define the number of vertices on the spine with a neighbour not on the spine to be $R$ and for each such vertex let the number of non-spine neighbours be $r_i$. Note that based on our definition of the spine the case of $p=1$ corresponds to stars and the case of $p=2$ corresponds to the modified stars mentioned in the theorem statement so we need not concern ourselves with either of these cases.

        \textbf{Case 1: $R=2$, $p>2$:} In this case, our caterpillar is such that the two end vertices of the spine are the only spine vertices that can have non-spine neighbours. 

\begin{figure}[H]
    \centering
\begin{tikzpicture}[scale=0.5]
    \SetUpVertex[FillColor=white]

    \tikzset{VertexStyle/.append style={minimum size=8pt, inner sep=1pt}}

    \Vertex[x=0,y=0,NoLabel=true,]{V0}
    \Vertex[x=2,y=0,NoLabel=true,]{V1}
    \Vertex[x=4,y=0,NoLabel=true,]{V2}
    \Vertex[x=6,y=0,NoLabel=true,]{V3}
    \Vertex[x=8,y=0,NoLabel=true,]{V4}

    \Vertex[x=-2,y=1,NoLabel=true,]{V0A}
    \Vertex[x=-2,y=-1,NoLabel=true,]{V0B}
    
    \Vertex[x=10,y=1,NoLabel=true,]{V4A}
    \Vertex[x=10,y=-1,NoLabel=true,]{V4B}
    
    \Edge(V0)(V1)
    \Edge(V1)(V2)
    \Edge(V2)(V3)
    \Edge(V3)(V4)

    \Edge(V0)(V0A)
    \Edge(V0)(V0B)
    \Edge(V4)(V4A)
    \Edge(V4)(V4B)

\end{tikzpicture}
    \caption{An example of a caterpillar with $R = 2$, $p = 5$}
\end{figure}

        If $p = 3$, $r_1 = \ell$ and $r_2 = t$ with $\ell \geq t$ we can see that at least $\ell + t + \ell + 1 + t + 1 + t + 2$ vertices will burn. Using the fact that $\ell + t = n-3$ we get $2\left(n-3\right) + 4 + t$, which, when divided by $n$, yields $2 - \frac2n + \frac{t}{n} > 2 - \frac2n$.

        If $p = 4$, then let $\ell = r_1 \geq r_2 = k$ again. We again see that at least $\ell + t + \ell + 1 + t + 1 + t + 2 + t + 2$ vertices will burn, we divide by $n$ and use the fact that $\ell + t = n-4$ to get $2 - \frac2n + \frac{2k}{n} > 2 - \frac2n$.

        If $p > 4$ then note that there are $n-p$ non-spine vertices. From the leaves we have $n-p$ vertices burning, from the ends of the spine we get another $n-p+2$ burning, from the interior vertices of the spine we get $2\left(p-2\right)$ vertices burning since that vertex as well as one of its spine neighbours will burn, and from the vertices on the spine that are at distance $2$ or more from the both ends of the spine (which are guaranteed to exist since $p > 4$) we get an additional spine vertex burning thus giving us an additional $p - 4$ vertices burned. Adding these all up gives $2n + p - 6$, dividing by $n$ gives $2 + \frac{p-6}{n}$ which is greater than $2 - \frac2n$ since $p > 4$.

\begin{figure}[H]
    \centering
\begin{tikzpicture}[scale=0.5]
    \SetUpVertex[FillColor=white]

    \tikzset{VertexStyle/.append style={minimum size=8pt, inner sep=1pt}}

    \Vertex[x=0,y=0,NoLabel=true,]{V0}
    \Vertex[x=2,y=0,NoLabel=true,]{V1}
    \Vertex[x=4,y=0,NoLabel=true,]{V2}
    \Vertex[x=6,y=0,NoLabel=true,]{V3}
    \Vertex[x=8,y=0,NoLabel=true,]{V4}

    \Vertex[x=-2,y=1,NoLabel=true,]{V0A}
    \Vertex[x=-2,y=-1,NoLabel=true,]{V0B}
    
    \Vertex[x=10,y=1,NoLabel=true,]{V4A}
    \Vertex[x=10,y=-1,NoLabel=true,]{V4B}

    \Vertex[x=4.5,y=-1,NoLabel=true,]{V2A}
    \Vertex[x=3.5,y=-1,NoLabel=true,]{V2B}

    \Edge(V0)(V1)
    \Edge(V1)(V2)
    \Edge(V2)(V3)
    \Edge(V3)(V4)

    \Edge(V2)(V2A)
    \Edge(V2)(V2B)
    
    \Edge(V0)(V0A)
    \Edge(V0)(V0B)
    \Edge(V4)(V4A)
    \Edge(V4)(V4B)

\end{tikzpicture}
    \caption{An example of a caterpillar with $R = 3$, $p = 5$}
\end{figure}

    \textbf{Case 2: $R > 2$, $p > 2$:} First, each of the non-spine vertices gets burned once when they are a source and once when their nearest spine vertex is a source (which also burns that spine vertex), which gives $2\left(n-p\right) + R$ burned vertices. Second, every spine vertex with non-spine neighbours except for the outermost such vertices will cause another spine vertex with non-spine neighbours to burn, thus burning at least an additional $\left(R-2\right)\min\{r_i\}$ vertices. Every spine vertex with all their neighbours on the spine will always burn a spine vertex with neighbours off the spine since we forced the endpoints of the spine to have neighbours off the spine, thus burning at least an additional $\left(p-R\right) \min\{r_i\}$ vertices. Finally, every spine vertex that is not an endpoint of the spine (which is guaranteed to exist since $p > 2$) will burn an additional vertex on the spine, thus burning an additional $p-2$ vertices. This yields $2n - 2p + R + \left(R-2\right) \min\{r_i\} + \left(p-R\right) \min\{r_i\} + p-2$ which is greater than or equal to $2n - 2p + R + R-2 + p-R + p-2 = 2n + R - 4$. Now dividing by $n$ gives $2 + \frac{R-4}{n}$ which is greater than $2 - \frac2n$ since $R > 2$.
    \end{proof}

    As previously mentioned, we cannot use the same strategy to find the optimal graphs with two fires and two firefighters with distance-restrictions and path-restrictions. This is because the optimal graphs of order at least $14$ are stars with four of their original edges subdivided, and the expected damage is strictly larger with $d \leq 2$ or with path restrictions compared to the original game on these graphs. Simply consider the case where one fire starts on the central vertex and the other starts on an adjacent leaf vertex. The optimal strategy for the original game is to defend two of the non-leaf vertices adjacent to the center and then defend the two leaf vertices which have not yet burned but are about to burn. This strategy is depicted in Figure~\ref{fig:22optimal}. The orange diamond vertices represent the initial fires, the black star shaped vertices represent the vertices defended on the first turn.
 
\begin{figure}[H]
    \centering
\begin{tikzpicture}[scale=0.5]
    \SetUpVertex[FillColor=white]

    \tikzset{VertexStyle/.append style={minimum size=8pt, inner sep=1pt}}

    \Vertex[x=0,y=0,NoLabel=true,]{V00}
    
    \Vertex[a=0,d=2cm,NoLabel=true,]{V10}
    \Vertex[a=60,d=2cm,NoLabel=true,]{V11}
    \Vertex[a=120,d=2cm,NoLabel=true,]{V12}
    \Vertex[a=180,d=2cm,NoLabel=true,]{V13}
    \Vertex[a=240,d=2cm,NoLabel=true,]{V14}
    \Vertex[a=300,d=2cm,NoLabel=true,]{V15}
    
    \Vertex[a=120,d=4cm,NoLabel=true,]{V22}
    \Vertex[a=180,d=4cm,NoLabel=true,]{V23}
    \Vertex[a=240,d=4cm,NoLabel=true,]{V24}
    \Vertex[a=300,d=4cm,NoLabel=true,]{V25}

    \Edge(V00)(V10)
    \Edge(V00)(V11)
    \Edge(V00)(V12)
    \Edge(V00)(V13)
    \Edge(V00)(V14)
    \Edge(V00)(V15)
    
    \Edge(V12)(V22)
    \Edge(V13)(V23)
    \Edge(V14)(V24)
    \Edge(V15)(V25)
    
    \tikzset{VertexStyle/.append style={minimum size=12pt, inner sep=0.5pt, diamond, orange, text=black}}
    
    \Vertex[x=0,y=0,L={\scriptsize 0},]{V00}
    \Vertex[a=0,d=2cm,L={\scriptsize 0},]{V10}
    
    \tikzset{VertexStyle/.append style={minimum size=8pt, inner sep=1pt, circle, orange, text=black}}
   
    \Vertex[a=60,d=2cm,L={\scriptsize 1},]{V11}
    \Vertex[a=120,d=2cm,L={\scriptsize 1},]{V12}
    \Vertex[a=180,d=2cm,L={\scriptsize 1},]{V13}

    \tikzset{VertexStyle/.append style={minimum size=12pt, inner sep=0.5pt, star, black, text=black!20}}

    \Vertex[a=240,d=2cm,L={\scriptsize 0},]{V14}
    \Vertex[a=300,d=2cm,L={\scriptsize 0},]{V15}
    
    \tikzset{VertexStyle/.append style={minimum size=8pt, inner sep=1pt, rectangle, black, text=black!20}}

    \Vertex[a=120,d=4cm,L={\scriptsize 1},]{V22}
    \Vertex[a=180,d=4cm,L={\scriptsize 1},]{V23}
    
\end{tikzpicture}
    \caption{An optimal strategy in the original game which is not feasible with path restrictions or small $d$}
    \label{fig:22optimal}
\end{figure}

The above strategy is not possible with path restrictions. As a result we cannot match the minimum from the original game and so there could be graphs whose expected damage with path restrictions sits between the expected damage without path restrictions and the expected damage with path restrictions of the graph in Figure~\ref{fig:22optimal}. Therefore the optimal graph with path restrictions need not be the same as the optimal graph without path restrictions.

A useful tool for finding optimal graphs in the original game is the fact that the {\sc MVS} function is monotonic on spanning subgraphs so any search can be restricted to trees. However, if there are restrictions on the movement of the firefighters then the deletion of edges can harm the firefighters as we saw in Lemma~\ref{lemma:polytrees} where the firefighters only get to defend for one turn. Even if the firefighters can move any distance they wish but are still not allowed to pass through burning vertices they can lose access to parts of the graph when the subgraph induced by the unburned vertices is disconnected. The remainder of the section will focus on this monotonicity question and whether or not the increase in expected damage can be bounded.

First we consider the path-restricted $d=\infty$ case with one fire and one firefighter on a cycle on $7$ vertices as well as on a path which spans that cycle. On the cycle it is easy to see that the expected damage is $2$ since the firefighter will defend a vertex adjacent to the fire, the fire will spread to its other neighbour, and the firefighter will then move around the cycle and stop the fire from spreading further. Contrast this with the path, where the optimal strategy is to defend whichever side of the fire contains a longer subpath and then let the fire spread until it cannot spread anymore. The only cases on the path where less than two vertices burn is when the fire is on a leaf, but if we pair those up with the cases where the fire starts distance $2$ from a leaf they average out to $2$ vertices burning. Thus since in every other case at least two vertices burn, and there are guaranteed to be some remaining cases where at least four burn, the path will have higher expected damage with path restrictions than without.

    So what about the case of interest with two fires and two firefighters? It turns out that for $b$ fires and $b$ firefighters, there exist graphs where the expected damage of a graph is strictly less than the expected damage of one of its connected spanning subgraphs.

    \begin{theorem} \label{thm:nonmono}
        Let $b$ be a positive integer, $G_A$ a connected graph on at least $b + 1$ vertices and $G_B$ be a connected graph on exactly $b + 1$ vertices. For any integer $\ell > 0$, create $G_{\ell}$ as follows. Label the vertices of $G_B$ $v_1, v_2, \dots, v_{b+1}$ and arbitrarily select a $\left(b + 1\right)$-subset of the vertices of $G_A$ to label using the same set of labels. For each vertex in $G_B$ connect it via a path with $\ell$ internal vertices to the vertex in $G_A$ with the same label. If $\ell$ is sufficiently large then $E_{pr}\left(G_{\ell};b,b,\infty\right) < E_{pr}\left(G_{\ell} - E\left(G_B\right);b,b,\infty\right)$. Furthermore, the ratio $\frac{E_{pr}\left(G_{\ell};b,b,\infty\right)}{E_{pr}\left(G_{\ell} - E\left(G_B\right);b,b,\infty\right)}$ can be arbitrarily large.
    \end{theorem}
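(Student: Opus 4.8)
The plan is to compare the two expected-damage values asymptotically as $\ell \to \infty$, with $G_A$, $G_B$ and $b$ held fixed. Since $G_A$ and $G_B$ have constant size while the $b+1$ connecting paths contribute $(b+1)\ell$ vertices, for large $\ell$ the average defining $E_{pr}$ is dominated by the initial fire sets $F$ that lie on the long paths, and $E_{pr}$ is exactly the expected value, over a uniformly random such $F$, of the minimum number of vertices that burn. I would first record the structural dichotomy driving everything: in $G_\ell$ the copies $G_A$ and $G_B$ act as hubs joined by $b+1$ internally disjoint paths, so any two paths close up into a long cycle through the hubs, whereas in $G_\ell - E(G_B)$ the vertices of $G_B$ become leaves, so each path is a pendant tail hanging off $G_A$ and cannot be closed into a cycle. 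The whole argument rests on the phenomenon already visible in the $C_7$ versus $P_7$ comparison discussed above: on a cycle a firefighter can run around and pinch a fire off from both sides, while on a dead-end tail the fire escapes to the open leaf end.

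Next I would isolate the favourable configurations, namely those $F$ in which the $b$ fires lie on $b$ distinct paths, each at an interior vertex in (say) the middle third of its path; a positive constant fraction of all $\binom{n}{b}$ choices have this form once $\ell$ is large. On $G_\ell - E(G_B)$ a firefighter assigned to such a fire faces the bulk of the graph (all of $G_A$, the other tails, and the part of its own tail below the fire) on one side and only the leaf on the other; to keep fire out of $G_A$, and hence out of every other tail, it is forced to defend on the $G_A$-side, after which the fire runs unimpeded up to the leaf, burning $\Theta(\ell)$ vertices. Diverting a second firefighter to the leaf side would leave another of the $b$ fires unattended and burn even more, so the minimum burn on these $F$ is $\Theta(\ell)$. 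On $G_\ell$, by contrast, each firefighter defends the vertex just on the $G_A$-side of its fire at time $0$ and then, because $d=\infty$ and the hubs together with the one fire-free path are still unburnt (the fires need $\Theta(\ell)$ steps to reach them), runs around to the far side of its fire and defends there at time $1$, pinching that fire off after only $O(1)$ vertices burn. Thus on the favourable $F$ one has $D_{\mathrm{red}}(F) - D_{\mathrm{full}}(F) = \Theta(\ell)$.

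I would then argue that the remaining configurations cannot reverse this gain. Fire sets with two or more fires on a single path force a burning inner stretch of length $\Theta(\ell)$ in \emph{both} graphs, since a firefighter cannot enter the segment between two fires from outside without crossing fire (this holds even with the run-around, which still meets the upper fire from above), so such $F$ contribute essentially equally and cancel in the difference. The only configurations on which $G_\ell$ can do strictly worse are those with a fire within $O(1)$ of a hub, where fire may leak through $G_B$ into a fresh path; these form an $O(1/\ell)$ fraction of all $F$ and, bounded crudely by $n = \Theta(\ell)$, alter the average by only $O(1)$. Hence $E_{pr}(G_\ell - E(G_B); b,b,\infty) - E_{pr}(G_\ell; b,b,\infty) = \Omega(\ell) - O(1) > 0$ for all sufficiently large $\ell$, which gives the strict inequality. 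For $b=1$ the co-path obstruction is absent and $G_\ell$ is essentially a single long cycle, so its expected damage stays bounded (the $C_7$ phenomenon) while the spanning-subgraph value is $\Theta(\ell)$; specialising to this case therefore makes the two expected-damage values differ by an unbounded factor as $\ell \to \infty$, yielding the ratio claim.

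I expect the main obstacle to be controlling $G_\ell$ on \emph{all} fire sets rather than just the favourable ones. One must verify that the run-around routes really stay fire-free long enough for each firefighter to act and that the $b$ detours, all passing through the shared fire-free path and the hubs, do not interfere; more importantly, one must show that the awkward configurations (several fires crowded on one path, or a fire abutting a hub) contribute only $O(1)$ to the difference, so that they cannot swamp the $\Omega(\ell)$ advantage harvested from the favourable sets. The tail lower bound for $G_\ell - E(G_B)$ is comparatively routine, requiring only a local suboptimality argument on a pendant path together with a counting estimate for the fraction of favourable $F$.
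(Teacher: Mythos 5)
Your two driving mechanisms are the same as the paper's: for $G_\ell$, a firefighter defends the $G_A$-side neighbour of its fire and then (since $d=\infty$ and at least one of the $b+1$ paths is fire-free) runs through $G_A$, the empty path and $G_B$ to pinch the fire off from the other side; for $G_\ell - E(G_B)$, the tails are dead ends, and with $b$ fires deep inside $b$ distinct tails either every fire gets a firefighter committed to its leaf side (so nothing blocks the $G_A$ sides) or some fire runs $\Theta(\ell)$ vertices to its leaf. The paper packages this differently — it bounds the two quantities separately, showing $\binom{|V(G_\ell)|}{b}E_{pr}(G_\ell;b,b,\infty)\in\mathcal{O}(\ell^b)$ (so $E_{pr}(G_\ell)\in\mathcal{O}(1)$) and $\binom{|V(G_\ell)|}{b}E_{pr}(G_\ell-E(G_B);b,b,\infty)\in\Omega(\ell^{b+1})$ (so the latter is $\Omega(\ell)$) — rather than estimating the difference configuration class by configuration class, and this separation delivers the strict inequality and the unbounded ratio simultaneously for every $b$.

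There is, however, a genuine gap in your handling of fire sets with two or more fires on a single path. You assert that such sets force a burning stretch of length $\Theta(\ell)$ in \emph{both} graphs because a firefighter "cannot enter the segment between two fires from outside without crossing fire." But firefighters are \emph{placed}, not moved, at time $0$, so one firefighter can simply start inside the inter-fire segment; it blocks one fire's entry at time $0$ and the other's at time $1$, while a second firefighter handles the outward fronts using the run-around. In $G_\ell$ this contains everything with $O(1)$ damage (this is what the paper's "at most $2b$ vertices burn" claim and its footnote about nearby fires are covering). The error matters: for $b\ge 2$ these same-path configurations are a \emph{constant} fraction of all $\binom{n}{b}$ fire sets, so under your accounting $E_{pr}(G_\ell;b,b,\infty)=\Theta(\ell)$, which would cap the ratio at a constant — and this is precisely why you were forced to retreat to $b=1$ for the ratio claim, leaving that claim unproved for the general construction the theorem describes. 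Separately, "contribute essentially equally and cancel in the difference" is asserted rather than argued; since the whole theorem is about the failure of monotonicity under edge deletion, you cannot take for granted that $G_\ell$ does no worse than $G_\ell-E(G_B)$ on any class of configurations without exhibiting a strategy, which is what the uniform $2b$ bound supplies.
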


    For an example of the construction in the theorem statement see Figure~\ref{fig:G3}.

    \begin{proof}
        First we show that $\binom{|V\left(G_{\ell}\right)|}{b} E_{pr}\left(G_{\ell};b,b,\infty\right) \in \mathcal{O}\left(\ell^b\right)$. Notice that $|V\left(G_{\ell}\right)| \in \mathcal{O}\left(\ell\right)$ and that there are $\mathcal{O}\left(\ell^{b-1}\right)$ initial burning configurations where at least one vertex is in $V\left(G_A\right) \cup V\left(G_B\right) \cup N\left(V\left(G_A\right)\right) \cup N\left(V\left(G_B\right)\right)$ where $N\left(S\right)$ is the set of vertices which are not in $S$ but have a neighbour in $S$. Thus since at most $\mathcal{O}\left(\ell\right)$ vertices can burn in these $\mathcal{O}\left(\ell^{b-1}\right)$ scenarios, we have that this set of initial configurations contributes at most $\mathcal{O}\left(\ell^b\right)$ to $\binom{|V\left(G_{\ell}\right)|}{b} E_{pr}\left(G_{\ell};b,b,\infty\right)$. The remaining initial configurations are configurations where no initial burning vertices are in $V\left(G_A\right) \cup V\left(G_B\right) \cup N\left(V\left(G_A\right)\right) \cup N\left(V\left(G_B\right)\right)$. Here we can show that at most $2b$ vertices burn in any of these configurations. The strategy to do this is for each fire, one firefighter defends the neighbour of the fire which is closest to $G_A$. On the next turn the firefighters each do one of two things. If the firefighter has no clear path to $G_A$ they simply move as far as possible along their path towards $G_A$ and defend that vertex. Otherwise the firefighter has a clear path to $G_A$, so they move through $G_A$, through the path which has no fire, and then through $G_B$ to block any fires moving towards $G_B$. At this point the fire can no longer spread since the initially burning vertices were all of degree two, their neighbours were of degree two, the firefighters defended one of the neighbours, and the firefighters defended the vertex at distance $2$ on the other side from that neighbour. Note that if the fires are close together it could cause this strategy to be invalid, but a combination of removing invalid moves and having the firefighter defend the other side of the fire yields a strategy which will perform just as well in these cases. So in these situations every initially burning vertex burned at most one other vertex. Thus in these initial configurations there are at most $2b \binom{\left(b+1\right)\left(\ell - 2\right)}{b} \in \mathcal{O}\left(\ell^b\right)$ vertices burning.

        We now show that $\binom{|V\left(G_{\ell}\right)|}{b} E_{pr}\left(G_{\ell} - E\left(G_B\right);b,b,\infty\right) \in \Omega\left(\ell^{b+1}\right)$. Select one path (which now terminates at a leaf) to be empty and place one fire on each of the remaining paths so that the fire is at least distance $\frac{\ell}{4}$ from the leaf and from $G_1$. There are $\left(\frac{\ell}{2}\right)^b$ such configurations and in each configuration one of two things happens. Either the firefighters all initially defend on the leaf side of the fires, in which case at least $\frac{\ell}{4}$ vertices burn, or at least one fire has no firefighter between itself and the leaf, in which case at least $\frac{\ell}{4}$ vertices burn. Thus in these configurations at least $\left(\frac{\ell}{4}\right) \left(\frac{\ell}{2}\right)^b \in \Omega\left(\ell^{b+1}\right)$ vertices burn.
    \end{proof}

\begin{figure}[H]
    \centering
\begin{tikzpicture}[scale=0.5]
    \GraphInit[vstyle=Classic]
    \SetUpVertex[FillColor=white]

    \tikzset{VertexStyle/.append style={minimum size=8pt, inner sep=1pt}}

    \Vertex[x=0,y=4,NoLabel=true,]{VA1}
    \Vertex[x=0,y=2,NoLabel=true,]{VA2}
    \Vertex[x=0,y=0,NoLabel=true,]{VA3}
    \Vertex[x=-1,y=2,NoLabel=true,]{VA4}
    
    \Vertex[x=2,y=4,NoLabel=true,]{VI11}
    \Vertex[x=4,y=4,NoLabel=true,]{VI12}
    \Vertex[x=6,y=4,NoLabel=true,]{VI13}
    
    \Vertex[x=2,y=2,NoLabel=true,]{VI21}
    \Vertex[x=4,y=2,NoLabel=true,]{VI22}
    \Vertex[x=6,y=2,NoLabel=true,]{VI23}
     
    \Vertex[x=2,y=0,NoLabel=true,]{VI31}
    \Vertex[x=4,y=0,NoLabel=true,]{VI32}
    \Vertex[x=6,y=0,NoLabel=true,]{VI33}
    
    \Vertex[x=8,y=4,NoLabel=true,]{VB1}
    \Vertex[x=8,y=2,NoLabel=true,]{VB2}
    \Vertex[x=8,y=0,NoLabel=true,]{VB3}

    \Edge(VA4)(VA3)
    \Edge(VA4)(VA2)
    \Edge(VA4)(VA1)

    \Edge(VA1)(VA2)
    \Edge(VA2)(VA3)

    \Edge(VB1)(VB2)
    \Edge(VB2)(VB3)
    
    \Edge(VA1)(VI11)
    \Edge(VI11)(VI12)
    \Edge(VI12)(VI13)
    \Edge(VI13)(VB1)
    
    \Edge(VA2)(VI21)
    \Edge(VI21)(VI22)
    \Edge(VI22)(VI23)
    \Edge(VI23)(VB2)
    
    \Edge(VA3)(VI31)
    \Edge(VI31)(VI32)
    \Edge(VI32)(VI33)
    \Edge(VI33)(VB3)

    \tikzset{EdgeStyle/.append style={bend right=90}}
    \Edge(VA1)(VA3)
    \tikzset{EdgeStyle/.append style={bend left}}
    \Edge(VB1)(VB3)

\end{tikzpicture}
    \caption{$G_3$ with $b=2$ when $G_A$ is $K_4$ and $G_B$ is $K_3$}
    \label{fig:G3}
\end{figure}

    It is also worth noting that we can take $G_A$ to be a tree and we get a spanning subtree of the graph $G_{\ell}$ in Theorem~\ref{thm:nonmono}, which tells us that we can remove as many edges as possible without disconnecting the graph and still have a graph that is worse for the firefighters than the original graph.

\section{Discussion and Open Problems}\label{sec:conclusions}
    
This paper establishes the NP-completeness of {\sc $d$-DPR-$b$-FF} and {\sc $d$-DR-$b$-FF}, but there are still many questions to be asked. For instance, we showed that both problems are NP-complete when $d \geq 2$ but we do not yet know the complexity when $d=1$. Similarly, {\sc $b$-Firefighter} is known to be NP-complete on trees of maximum degree $b+2$ but we do not know which structural restrictions still yield NP-complete instances of {\sc $d$-DPR-$b$-FF} and {\sc $d$-DR-$b$-FF}. We showed in Lemma~\ref{lemma:polytrees} that both problems are tractable on trees under certain mild restrictions but how much structure is required for NP-completeness?

We also discussed the idea of expected damage. Section~\ref{sec:expectedburn} characterizes the optimal graphs with a single fire and firefighter for all values of $d$ with and without path restrictions. We saw however that the lack of monotonicity with path restrictions makes it hard to determine optimal graphs for more fires and firefighters. This lack of monotonicity is particularly interesting and characterizing exactly when a subgraph has smaller expected damage could be an interesting direction of research. From a complexity standpoint one could consider a decision problem where a certain, fixed number $s$ of edges can be added to the graph and the question is whether or not the path-restricted expected damage can be reduced by the addition of at most $s$ edges.

\section*{Acknowledgements}

\noindent Authors A.C. Burgess and D.A. Pike acknowledge research support from NSERC Discovery Grants RGPIN-2019-04328 and RGPIN-2022-03829, respectively. Author J. Marcoux acknowledges support from the Mitacs Accelerate Program and Verafin.

\bibliographystyle{amsplainnodash}
\bibliography{bibliography}

\end{document}